\documentclass{article}

\usepackage{amsmath}
\usepackage{amsthm}
\usepackage{amssymb}
\usepackage{graphicx}
\usepackage[inline]{enumitem}
\usepackage{hyperref}
\usepackage{xcolor}
\usepackage[round, authoryear, sort]{natbib}

\usepackage[a4paper, total={6in, 8in}]{geometry}

\usepackage{pgfplots}
\pgfplotsset{compat=1.18}
  
\newtheorem{theorem}{Theorem}
\newtheorem{lemma}{Lemma}

\newtheorem{claim}{Claim}

\newenvironment{proofclaim}[1][Proof of Claim]{%
  \pushQED{\qed}%
  \begin{proof}[#1]%
}{%
  \end{proof}%
}

\newenvironment{proofwithclaims}[1][Proof]{%
  \setcounter{claim}{0}%
  \begin{proof}[#1]%
}{%
  \end{proof}%
}

\newcommand{\E}{\mathbb{E}}
\newcommand{\R}{\mathbb{R}}
\renewcommand{\L}{\mathcal{L}}

\newcommand{\LAS}{\operatorname{LAS}}

\usepackage{todonotes}
\colorlet{greentransparent}{green!45}

\title{The Lasserre Rank of the Cropped Hypercube\vspace{-0em}}
\author{G\'erard Cornu\'ejols, Vrishabh Patil, Jiaye Wei}

\begin{document}

\maketitle

\begin{abstract}
     \noindent In an $n$-dimensional \emph{cropped hypercube} each of the $2^n$ cropping inequalities chops off a single corner of the $0$--$1$ hypercube by an $\ell_1$-distance $\rho$. The case $\rho = 1/2$ has been extensively studied in the literature.
     This paper shows that the Lasserre rank of the $n$-dimensional cropped hypercube where $\rho = 1/2$, $n \geq 2$, is the smallest integer $0\leq t \leq n$ such that $\Delta_t < 0$ in the recurrence $\Delta_{-1} = 1$, $\Delta_{0} = n-1$, $\Delta_t = (n-1)\Delta_{t-1} - t(n-t+1)\Delta_{t-2}$. It follows that the Lasserre rank can be computed in time $O(n^2 \log^2 n)$. Asymptotically, the rank is $\frac{n}{2} + c_{1/2}\sqrt{n} + o(\sqrt{n})$, where $c_{1/2}$ is the unique zero of a given function. Numerically, $c_{1/2} \approx 0.3825$. In fact, we prove such results for any fixed $0 < \rho < 1$.
\end{abstract}

\noindent\textbf{Keywords}: Lasserre hierarchy; sum of squares; semidefinite programming; cropped hypercube; Krawtchouk polynomials

\section{Introduction} \label{sec:intro}

Fix $0<\rho<1$ and let $N := \{1, \ldots, n\}$. The $n$-dimensional \emph{cropped hypercube} is the polytope
\begin{equation*}
     Q_{n,\rho} := \Bigl\{ x \in [0, 1]^n: g_I(x) \geq 0 \text{ for all } I \subseteq N \Bigr\},
\end{equation*}
where
\begin{equation} \label{eq:crop}
     g_I(x) := \sum_{i \in I} x_i + \sum_{i \in N \setminus I} (1 - x_i) - \rho.
\end{equation}
Geometrically, each vertex of the $0$-$1$ hypercube is cut off by a single inequality in the description of $Q_{n,\rho}$ at $\ell_1$-distance $\rho$. Together, the $2^n$ inequalities exclude all $2^n$ vertices.

Writing $S_{n,\rho} := Q_{n,\rho} \cap \{0, 1\}^n$, we have $S_{n,\rho} = \emptyset$ and therefore $\operatorname{conv}(S_{n,\rho}) = \emptyset$.

The cropped hypercube has been used as a canonical example for comparing cutting-plane and lift-and-project procedures.
In particular, the cropped hypercube obtained by taking $\rho=1/2$ has been studied extensively in the literature. 
The {\it rank} is the number of rounds of the procedure needed to reach $\operatorname{conv} (P \cap \mathbb{Z}^n)$  starting from a polyhedron $P$. 
The Chv\'atal--Gomory rank of $Q_{n,1/2}$ is $n$ (\cite{bockmayr1999chvatal}). \cite{laurent2003comparison} showed that the Sherali--Adams hierarchy also requires $n$ rounds for  $Q_{n,1/2}$ whereas the Lasserre rank is at most $n - 1$. For general polytopes, however, \cite{cheung2007computation} established that the Lasserre rank is $n$ in the worst case. Determining the precise Lasserre rank of the cropped hypercube has since become a natural test case for the strength of sum-of-squares relaxations. \cite{kurpisz2016tight} proved lower and upper bounds of $\Omega(\sqrt{n})$ and $n - \Omega(n^{1/3})$, respectively.  \cite{kurpisz2019boolean} sharpened the bounds for $Q_{n,1/2}$ as follows.
\begin{equation*}
    \left\lceil \frac{n}{2} \right\rceil \leq \operatorname{rank}_{\LAS}(Q_{n,1/2}) \leq \left\lceil \frac{n}{2} + \sqrt{n\log(2n)} \right\rceil.
\end{equation*}
More recently, \cite{kurpisz2023empty} established a lower bound of $n/2 + \Omega(\sqrt{n})$. Their work reduces the question of identifying the Lasserre rank of $Q_{n,1/2}$ to one quadratic inequality and then to positive-definiteness conditions for a tridiagonal matrix. These conditions yield lower and upper bounds on the rank but do not identify the exact transition for every $n$.
Symmetry reductions and univariate polynomial techniques have also been used more generally to analyze sum-of-squares certificates over the 0--1 hypercube (\cite{kurpisz2016sum,kurpisz2026certification}).

\cite{au2018elementary} studied the Lasserre rank of cropped hypercubes when $\rho$ decreases as $n$ increases. They show that the rank is $n$ when $\rho$ goes to 0 sufficiently fast.

In this paper, our three main contributions are:
\begin{enumerate}
    \item a simple recurrence for computing the Lasserre rank of $Q_{n,\rho}$. For every fixed rational $\rho$, the rank can be computed in $O(n^2 \log^2 n)$ time.  
    \item the following asymptotic formula for fixed $0 < \rho < 1$
    \begin{equation} \label{eq:asymt}
        \operatorname{rank}_{\LAS}(Q_{n,\rho}) = \frac{n}{2} + c_\rho\sqrt{n} + o(\sqrt{n}),
    \end{equation}
    where $c_\rho$ is the unique zero of the function $\int_0^\infty s^\rho e^{-s^2/8} \cos \left(cs + \frac{\pi\rho}{2}\right) \, ds$. For $\rho = 1/2$, we have $c_{1/2} \approx 0.3825$. This closes the previous $\sqrt{\log n}$ gap in the second-order term and determines its sharp constant.
    \item an exact characterization of the rank $n$ regime. There is a unique threshold $\rho_n \in (0,1)$ satisfying
    \begin{equation*}
        \rho_n \sum_{r=1}^n \frac{\binom{n}{r}}{r-\rho_n}=1,
    \end{equation*}
    such that $\operatorname{rank}{\LAS}(Q_{n,\rho}) = n$ if and only if $0 < \rho \leq \rho_n$. Moreover, $\rho_n \sim n/2^{n+1}$, closing the asymptotic factor-of-two gap between the previous bounds.
\end{enumerate}
These results are obtained through a sequence of reductions from an exponentially indexed semidefinite system to a weighted least-squares problem. At level $t$, the Lasserre relaxation is empty if and only if this least-squares problem with $t$ variables has optimum strictly below $\rho$. This gives a different route to the tridiagonal structure identified by \cite{kurpisz2023empty}. Our first new step is to prove that the sign of the least-squares optimum is exactly the sign of the determinant of an explicit tridiagonal matrix, which can be computed using a three-term recurrence. Relating this recurrence with Krawtchouk polynomials then allows us to determine the exact Lasserre rank for every $n$ and $0 < \rho < 1$.
The least-squares formulation can be solved directly for $n$ up to $250$. For example, when $\rho=1/2$, the ranks for $n = 10$, $100$, and $250$ are $6$, $53$, and $131$, respectively. The recurrence makes much larger computations routine. Still with $\rho=1/2$, for $n = 10^3$, $10^4$, and $10^5$, the corresponding ranks are $512$, $5038$, and $50120$, respectively.
It is worth noting that Krawtchouk polynomials were already used  by \cite{SlotLaurent} in the context of the Lasserre hierarchy.

The remainder of the paper is organized as follows. Section~\ref{sec:lasserre-hierarchy} recalls the Lasserre hierarchy. Section~\ref{sec:symmetry} uses the symmetries of the cropped hypercube to reduce feasibility to one canonical lifted point and one cropping inequality. Section~\ref{sec:least-squares} derives the least-squares characterization. Section~\ref{sec:rank-computation} derives the tridiagonal and Krawtchouk representations and analyzes the critical sign transition.
Section~\ref{sec:asymptotic-analysis} presents the asymptotic analysis.

\section{The Lasserre Hierarchy} \label{sec:lasserre-hierarchy}

We recall the Lasserre hierarchy for polyhedra described by linear inequalities (\cite{lasserre2001explicit,laurent2003comparison}). Let
\begin{equation*}
    P := \{x \in [0, 1]^n: g_\ell(x) \geq 0,\ \ell = 1, \ldots, m\}, \qquad S := P \cap \{0, 1\}^n,
\end{equation*}
where each $g_\ell$ is a linear function. The lifted variables are $y = (y_I)_{I \subseteq N}$, intended to represent the monomials $\prod_{i \in I} x_i$. Since $x_i^2 = x_i$ for $x_i \in \{0, 1\}$, the product of two monomials indexed by $I$ and $J$ is indexed by $I \cup J$.

For $r \geq 0$, the \emph{moment matrix} of order $r$ is indexed by the subsets of $N$ of cardinality at most $r$ and has entries
\begin{equation} \label{eq:moment-matrix}
    [M_r(y)]_{I,J} := y_{I \cup J}, \qquad |I|, |J| \leq r.
\end{equation}
If $g(x) = b - \sum_{i=1}^n a_i x_i$ is linear, define the shifted sequence $g \cdot y$ by
\begin{equation} \label{eq:shifted-sequence}
    (g \cdot y)_K := b y_K - \sum_{i=1}^n a_i y_{K \cup \{i\}},
    \qquad K \subseteq N.
\end{equation}
The matrix $M_r(g \cdot y)$ is the \emph{localizing matrix} of $g$ of order $r$. Its $(I,J)$-entry is
\begin{equation} \label{eq:localizing-matrix}
    [M_r(g \cdot y)]_{I,J} = b y_{I \cup J} - \sum_{i=1}^n a_i y_{I \cup J \cup \{i\}}.
\end{equation}

For an integer $t \geq 0$, the lifted level-$t$ relaxation is
\begin{equation} \label{eq:lasserre-lifted}
    \widetilde{\LAS}_t(P) := \left\{y: y_\emptyset = 1, \quad M_{t + 1}(y) \succeq 0, \quad M_t(g_\ell \cdot y) \succeq 0\ \text{for } \ell = 1, \ldots, m \right\},
\end{equation}
and its projection onto the space of original variables is
\begin{equation} \label{eq:lasserre-projected}
    \LAS_t(P) := \left\{x \in \R^n: x_i = y_{\{i\}} \text{ for all } i \in N \text{ and some } y \in \widetilde{\LAS}_t(P) \right\}.
\end{equation}
We use the convention in which the moment matrix has order $t + 1$ and the localizing matrices have order $t$. The bound inequalities $x_i \geq 0$ and $1 - x_i \geq 0$ need not be included among the $g_\ell$. Their localizing-matrix conditions are implied by $M_{t + 1}(y) \succeq 0$ (\cite{laurent2003comparison}, Lemma~5).

To see the meaning of these semidefinite conditions, fix $v \in \{0, 1\}^n$ and set $\mu(v)_I := \prod_{i \in I} v_i$ for $I \subseteq N$. Then $M_r(\mu(v))$ is the rank-one matrix whose $(I,J)$-entry is $\mu(v)_I \mu(v)_J$, and
\begin{equation*}
    M_r(g \cdot \mu(v)) = g(v)M_r(\mu(v)).
\end{equation*}
Consequently, $\mu(v)$ is feasible for \eqref{eq:lasserre-lifted} whenever $v \in S$, and hence $\operatorname{conv}(S) \subseteq \LAS_t(P)$. The hierarchy is nested and satisfies $\LAS_n(P) = \operatorname{conv}(S)$ (\cite{lasserre2001explicit,laurent2003comparison}). Its \emph{Lasserre rank} is the least $t$ for which $\LAS_t(P) = \operatorname{conv}(S)$.

We assume that $n \geq 2$ for the remainder of the paper.

Figure~\ref{fig:Q_3-lasserre} shows the three relaxations
$\LAS_t(Q_{3,1/2})$ for $t = 0, 1, 2$. In particular, $\LAS_0(Q_{3,1/2}) = Q_{3,1/2}$, the nonempty spectrahedron $\LAS_1(Q_{3,1/2})$ contains $\left(\tfrac{1}{2}, \tfrac{1}{2}, \tfrac{1}{2}\right)$, and $\LAS_2(Q_{3,1/2}) = \emptyset$. Of course, $\LAS_3(Q_{3,1/2}) = \emptyset$
as well.

\begin{figure}[htbp]
    \centering
    \includegraphics[width=\textwidth]{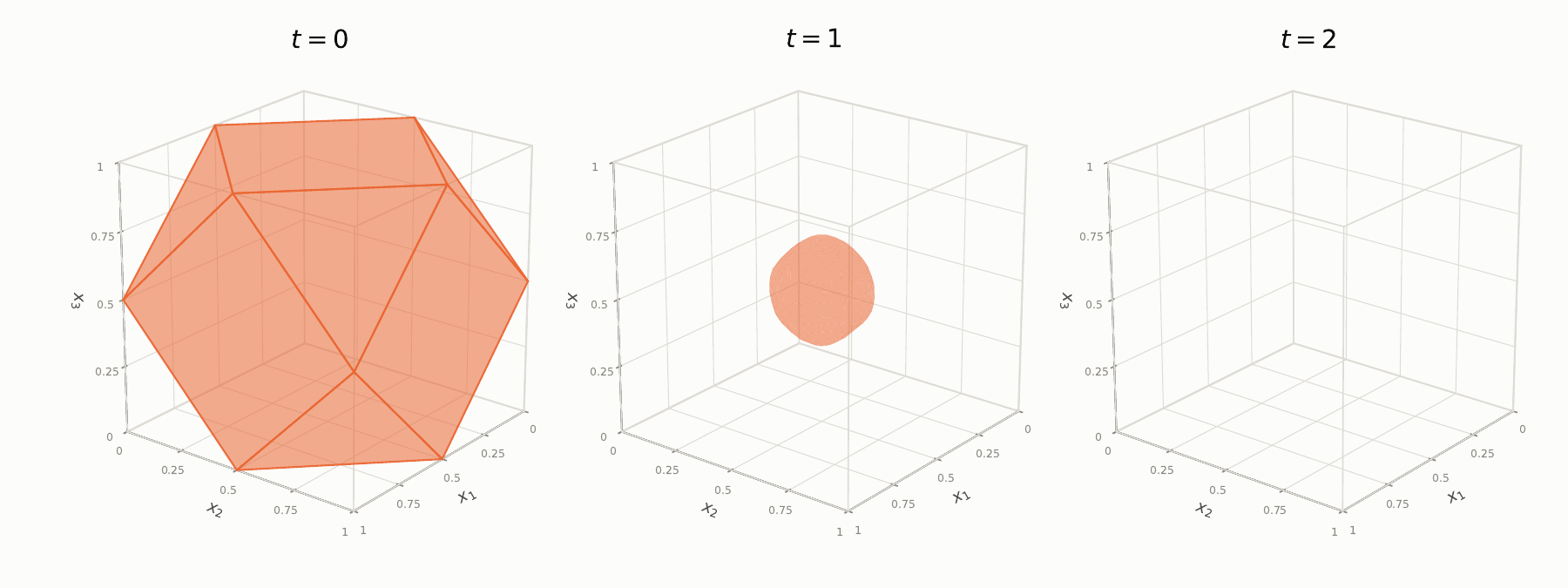}
    \caption{The Lasserre relaxations for $Q_{3,1/2}$ at levels $t = 0, 1, 2$.}
    \label{fig:Q_3-lasserre}
\end{figure}

Henceforth, we write $\widetilde{\LAS}_t$ and $\LAS_t$ for $\widetilde{\LAS}_t(Q_{n,\rho})$ and $\LAS_t(Q_{n,\rho})$, respectively. In the next section, we exploit the symmetries of $Q_{n,\rho}$ to decide this infeasibility question at a single canonical moment vector $y^*$.

\section{Symmetry Reduction} \label{sec:symmetry}

We exploit the symmetries of the cropped hypercube to reduce the feasibility of the level-$t$ Lasserre relaxation to a single lifted vector $y^*$ and, subsequently, to a single cropping inequality. This isolates the only localizing matrix that needs to be analyzed.

Let $\Gamma$ be the group generated by coordinate permutations and coordinate complementations $\tau_i : x_i \mapsto 1 - x_i$. A permutation $\sigma$ maps $g_I$ to $g_{\sigma(I)}$, while $\tau_i$ maps $g_I$ to $g_{I \triangle \{i\}}$. Both operations also permute the bound inequalities. Thus, every element of $\Gamma$ preserves the formulation.

We will use the following technical lemma to prove our main results. For an integer $k \geq 0$, let $m_k(x)$ denote the column vector of monomials $\prod_{i \in I} x_i$ with $I \subseteq N$ such that $|I| \leq k$.

\begin{lemma} \label{lemma:gamma-matrix}
    Order the entries of $m_k(x)$ first by degree and then in any fixed order within each degree. For every $\gamma \in \Gamma$, there is an invertible matrix $C_\gamma$ such that
    \begin{equation} \label{eq:gamma-matrix}
        m_k(\gamma(x)) = C_\gamma m_k(x).
    \end{equation}
    If $\gamma$ is a coordinate permutation, then $C_\gamma$ is a permutation matrix. If $\gamma$ is a complementation of a set of coordinates, then $C_\gamma$ is triangular with diagonal entries in $\{\pm 1\}$, and hence is invertible.
\end{lemma}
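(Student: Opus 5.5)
The plan is to handle the two types of generators separately and then combine them by composition. The combination step is straightforward. If $m_k(\gamma_1(x)) = C_{\gamma_1} m_k(x)$ and $m_k(\gamma_2(x)) = C_{\gamma_2} m_k(x)$ hold for all $x$, then
\[
m_k(\gamma_1\gamma_2(x)) = C_{\gamma_1} m_k(\gamma_2(x)) = C_{\gamma_1}C_{\gamma_2} m_k(x),
\]
so $C_{\gamma_1\gamma_2} = C_{\gamma_1}C_{\gamma_2}$. A product of invertible matrices is invertible. Since every $\gamma\in\Gamma$ is a finite product of permutations and single complementations $\tau_i$, it therefore suffices to treat a permutation $\sigma$ and a complementation $\tau_A$ of a set $A\subseteq N$. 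The latter equals $\prod_{i\in A}\tau_i$, and these factors commute.

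\emph{Permutations.} Take $\gamma(x)_i = x_{\sigma(i)}$; the convention $x_{\sigma^{-1}(i)}$ works the same way. Then
\[
\prod_{i\in I}\gamma(x)_i = \prod_{j\in\sigma(I)} x_j .
\]
Because $|\sigma(I)| = |I|\le k$, the monomial indexed by $I$ maps to the monomial indexed by $\sigma(I)$, and $I\mapsto\sigma(I)$ is a bijection on subsets of size at most $k$. Hence $C_\gamma$ has exactly one $1$ in each row and each column, which makes it a permutation matrix. It also preserves degree, so it is block diagonal with respect to the degree ordering.

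\emph{Complementations.} Let $\gamma = \tau_A$. For $|I|\le k$, expand
\[
\prod_{i\in I\cap A}(1-x_i)\prod_{i\in I\setminus A}x_i \;=\; \sum_{B\subseteq I\cap A}(-1)^{|B|}\prod_{j\in (I\setminus A)\cup B}x_j .
\]
Every monomial on the right is indexed by a subset $J\subseteq I$, so $|J|\le k$ and $m_k(\gamma(x))$ is indeed a linear combination of entries of $m_k(x)$. The only term with $|J| = |I|$ is $J = I$, obtained when $B = I\cap A$, and its coefficient is $(-1)^{|I\cap A|}\in\{\pm1\}$. All remaining terms have strictly smaller degree.

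Under the degree-first ordering, row $I$ of $C_\gamma$ therefore has nonzero entries only in column $I$ and in columns of lower degree. These columns all precede $I$, so $C_\gamma$ is lower triangular with diagonal entries $\pm1$. Its determinant is then $\pm1$, and it is invertible.

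The main point of care is making the ordering argument precise. Within a fixed degree, the complementation contributes nothing off the diagonal, because $J\subsetneq I$ forces $|J|<|I|$. This is why any order within a degree is acceptable. In contrast, a general product of permutations and complementations need not be triangular, but the lemma only claims invertibility in that case, which the composition argument supplies.
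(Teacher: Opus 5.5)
Your proof is correct and follows the paper's argument: a permutation permutes the monomials within each degree, and expanding $\prod_{i\in I\cap A}(1-x_i)\prod_{i\in I\setminus A}x_i$ gives a triangular matrix with diagonal entries $(-1)^{|I\cap A|}$. You also fill in two points the paper leaves implicit: the composition step $C_{\gamma_1\gamma_2}=C_{\gamma_1}C_{\gamma_2}$, which covers general $\gamma\in\Gamma$, and the observation that $J\subsetneq I$ forces $|J|<|I|$, which is why any order within a degree is harmless.
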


\begin{proof}
    A coordinate permutation simply permutes the monomials of each degree, so it induces a permutation matrix on the entries of $m_k(x)$.

    Now suppose that $\gamma$ complements the coordinates in a set $I \subseteq N$. For a monomial indexed by $J \subseteq N$ with $|J| \leq k$, we have
    \begin{equation*}
         \prod_{i \in J} \gamma(x)_i = \prod_{i \in J \cap I} (1 - x_i) \prod_{i \in J \setminus I} x_i = \sum_{T \subseteq J \cap I} (-1)^{|T|} \prod_{i \in (J \setminus I) \cup T} x_i.
    \end{equation*}

    Every monomial on the right has degree at most $|J|$. Therefore, with the degree ordering above, the matrix $C_\gamma$ is triangular. The coefficient of the original monomial $\prod_{i \in J} x_i$ is $(-1)^{|J \cap I|}$, which is either $1$ or $-1$. Thus all diagonal entries of $C_\gamma$ are $\pm 1$, so $C_\gamma$ is invertible because the determinant of a triangular matrix is the product of its diagonal entries.
\end{proof}

Define $y^*$ by
\begin{equation*}
     y^*_I := \left( \frac{1}{2} \right)^{|I|}, \qquad I \subseteq N.
\end{equation*}
Note that $y^* = \sum_{v \in \{ 0, 1 \}^n} \lambda_v \mu(v)$ with equal weights $\lambda_v = 2^{-n}$ on all $v$, where $\mu(v)_I := \prod_{i \in I} v_i$ (recall the definition from the previous section).
 
For $\rho=1/2$, Laurent showed that the vector $y^*$ is feasible at level $n - 1$ of the Sherali--Adams relaxation for the cropped hypercube (see \cite{laurent2003comparison}, Proposition~11). We prove the following.

\begin{lemma} \label{lemma:symmetry}
     The relaxation $\LAS_t$ is nonempty if and only if $y^*$ belongs to $\widetilde{\LAS}_t$.
\end{lemma}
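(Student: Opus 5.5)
One direction is immediate: if $y^* \in \widetilde{\LAS}_t$, then the point $(y^*_{\{i\}})_i = (\tfrac12,\ldots,\tfrac12)$ lies in $\LAS_t$, so $\LAS_t \neq \emptyset$. For the converse, I will start from an arbitrary feasible $y \in \widetilde{\LAS}_t$ and symmetrize it over $\Gamma$. The plan is to show that $\widetilde{\LAS}_t$ is convex and $\Gamma$-invariant, so that the group average $\bar y := \frac{1}{|\Gamma|}\sum_{\gamma\in\Gamma} \gamma\cdot y$ is feasible, and then to identify $\bar y$ with $y^*$.

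\textbf{Step 1: define the action and prove invariance.} I will define the action on lifted vectors through Riesz functionals. View $y$ as a linear functional $L_y$ on multilinear polynomials, with $L_y(\prod_{i\in I}x_i) = y_I$ and products reduced by $x_i^2=x_i$. Then set $L_{\gamma\cdot y}(p) := L_y(p\circ\gamma)$. Composition with $\gamma$ preserves multilinearity, because $\gamma$ acts affinely coordinatewise. Moreover, $(\gamma\cdot y)_\emptyset = 1$.

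For the moment matrix, write $M_{t+1}(y) = L_y(m_{t+1}(x)m_{t+1}(x)^\top)$, applied entrywise. Lemma~\ref{lemma:gamma-matrix} then gives
\[
M_{t+1}(\gamma\cdot y) = L_y\bigl(m_{t+1}(\gamma(x))m_{t+1}(\gamma(x))^\top\bigr) = C_\gamma M_{t+1}(y) C_\gamma^\top \succeq 0 .
\]
The localizing matrices work the same way:
\[
M_t(g_I\cdot(\gamma\cdot y)) = L_y\bigl((g_I\circ\gamma)\, m_t(\gamma(x))m_t(\gamma(x))^\top\bigr) = C_\gamma M_t(g_{I'}\cdot y)C_\gamma^\top \succeq 0,
\]
where $g_I\circ\gamma = g_{I'}$ for some $I'$, because $\Gamma$ permutes the cropping inequalities.

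\textbf{Step 2: average.} All the constraints defining $\widetilde{\LAS}_t$ are linear in $y$ and require membership in PSD cones, so $\widetilde{\LAS}_t$ is convex. Hence $\bar y \in \widetilde{\LAS}_t$.

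\textbf{Step 3: compute $\bar y$.} Averaging over the complementations alone already forces $\bar y = y^*$. Fix a nonempty $I$ and a coordinate $i\in I$, and pair each $\gamma$ with $\gamma\tau_i$. We have
\[
\prod_{j\in I}x_j + (1-x_i)\prod_{j\in I\setminus i}x_j = \prod_{j\in I\setminus i}x_j .
\]
Applying this identity inside the average gives $\bar y_I = \tfrac12 \bar y_{I\setminus\{i\}}$. Induction on $|I|$, starting from $\bar y_\emptyset=1$, then gives $\bar y_I = 2^{-|I|}$. To make the pairing rigorous, I will write the average as an average over the complementation subgroup $\{0,1\}^n$ composed with permutations, and check the identity at the level of $L_{\bar y}$. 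Since $\Gamma$ is finite, the average is well defined.

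The main obstacle is Step 1, because $\widetilde{\LAS}_t$ is stated in coordinates $y_I$ rather than as functionals. This needs care on two points. First, $L_y$ is well defined only up to degree $2t+2$, so I must check that composing with $\gamma$ never raises degree; it does not, because complementation is affine. Second, the identity $M_r(\gamma\cdot y)=C_\gamma M_r(y)C_\gamma^\top$ must hold entrywise, which needs $C_\gamma$ of Lemma~\ref{lemma:gamma-matrix} to express $m_r\circ\gamma$ exactly modulo $x_i^2=x_i$.
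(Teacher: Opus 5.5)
Your proposal is correct and follows the paper's proof essentially step for step. You symmetrize an arbitrary feasible $y$ over $\Gamma$, preserve feasibility via $M_k(\gamma\cdot y)=C_\gamma M_k(y)C_\gamma^\top$ (using that $\Gamma$ permutes the cropping inequalities), average by convexity, and identify $\bar y=y^*$ through $\prod_{j\in I}x_j+(1-x_i)\prod_{j\in I\setminus\{i\}}x_j=\prod_{j\in I\setminus\{i\}}x_j$ and induction on $|I|$; the paper phrases this last step as the invariance $\tau_i\cdot\bar y=\bar y$ applied to the complementation formula.

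One small point: under your convention $L_{\gamma\cdot y}(p)=L_y(p\circ\gamma)$, the termwise pairing must be $\gamma\leftrightarrow\tau_i\circ\gamma$, not $\gamma\circ\tau_i$, since the latter fails termwise when $\gamma$ moves coordinate $i$. Your fallback of averaging over the complementation subgroup first makes this moot, because that average alone yields $2^{-|I|}$ for any $y'$ with $y'_\emptyset=1$.
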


\begin{proofwithclaims}
    One direction is straightforward. If $y^* \in \widetilde{\LAS}_t$, then its projection belongs to $\LAS_t$, which is therefore nonempty.

    For the converse, suppose that $\LAS_t$ is nonempty and let $y \in \widetilde{\LAS}_t$. Each $\gamma \in \Gamma$ induces an invertible linear map on the lifted variables by substituting $\gamma(x)$ into each monomial and replacing every resulting monomial indexed by $I$ with $y_I$. Indeed, $\gamma$ is a composition of a permutation and complementations: a permutation gives $(\sigma \cdot y)_I = y_{\sigma(I)}$, while a complementation gives
    \begin{equation} \label{eq:flip}
        (\tau_i \cdot y)_I =
        \begin{cases}
           y_{I \setminus \{i\}} - y_I, & i \in I, \\
           y_I, & i \notin I.
        \end{cases}
    \end{equation}
    Both maps fix $y_\emptyset = 1$.

    \medskip
    \begin{claim}
        The vector $\bar{y} = \frac{1}{|\Gamma|} \sum_{\gamma \in \Gamma} \gamma \cdot y$ belongs to $\widetilde{\LAS}_t$.
    \end{claim}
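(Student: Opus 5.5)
The plan is to show first that $\widetilde{\LAS}_t$ is invariant under each $\gamma \in \Gamma$, and then to conclude by convexity. Since $\widetilde{\LAS}_t$ is an intersection of the affine constraint $y_\emptyset = 1$ with linear matrix inequalities, it is convex. The average $\bar y$ is a convex combination of the points $\gamma\cdot y$, and each of these fixes $y_\emptyset=1$, as already noted. So it suffices to prove that $\gamma\cdot y\in\widetilde{\LAS}_t$ for every $\gamma\in\Gamma$.

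The key identity expresses the moment and localizing matrices of $\gamma\cdot y$ as congruences of those of $y$ by the matrices $C_\gamma$ of Lemma~\ref{lemma:gamma-matrix}. Let $L_y$ be the Riesz functional on multilinear polynomials, $L_y(\prod_{i\in I}x_i)=y_I$, with products reduced via $x_i^2=x_i$. Then $M_r(y)=L_y(m_r(x)m_r(x)^\top)$ entrywise, and $M_r(g\cdot y)=L_y(g(x)\,m_r(x)m_r(x)^\top)$. By construction, $L_{\gamma\cdot y}(p(x))=L_y(p(\gamma(x)))$ for multilinear $p$. This holds because substitution of $\gamma$ commutes with multilinear reduction, since $\gamma(x)_i$ is again an idempotent-compatible affine function of a single variable.

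Using \eqref{eq:gamma-matrix}, we obtain
\begin{equation*}
M_{t+1}(\gamma\cdot y)=L_y\bigl(m_{t+1}(\gamma(x))m_{t+1}(\gamma(x))^\top\bigr)=C_\gamma M_{t+1}(y)C_\gamma^\top\succeq 0 .
\end{equation*}
Similarly, $M_t(g_I\cdot(\gamma\cdot y))=C_\gamma M_t((g_I\circ\gamma)\cdot y)C_\gamma^\top$. Since $g_I\circ\gamma=g_{I'}$ for some $I'$, because $\Gamma$ permutes the cropping inequalities, this matrix is congruent to a localizing matrix that is PSD by the assumption $y\in\widetilde{\LAS}_t$. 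Hence $\gamma\cdot y\in\widetilde{\LAS}_t$.

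The step I expect to need the most care is justifying $L_{\gamma\cdot y}(p)=L_y(p\circ\gamma)$ for products of monomials and linear forms. In particular, the reduction $x_i^2=x_i$ must be compatible with complementation: $(1-x_i)^2=1-x_i$ after reduction. I would check this directly on the generators, namely permutations and single complementations $\tau_i$ as in \eqref{eq:flip}, and then extend it to all of $\Gamma$ by composition. For single generators the formula is easy to verify monomial by monomial.

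A further technicality is that $L_y(p\circ\gamma)$ involves monomials of degree at most that of $p$. The triangularity in Lemma~\ref{lemma:gamma-matrix} therefore keeps all indices within the range of the relevant matrices, so only entries present in $M_{t+1}(y)$ and the $M_t(g_I\cdot y)$ are used.
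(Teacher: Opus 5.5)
Your proposal is correct and is essentially the paper's argument. You show that $\widetilde{\LAS}_t$ is invariant under each $\gamma\in\Gamma$ via the congruences $M_{t+1}(\gamma\cdot y)=C_\gamma M_{t+1}(y)C_\gamma^\top$ and $M_t(g_I\cdot(\gamma\cdot y))=C_\gamma M_t(g_{I'}\cdot y)C_\gamma^\top$ from Lemma~\ref{lemma:gamma-matrix}, then average using convexity. Your Riesz-functional check that substitution by $\gamma$ commutes with the reduction $x_i^2=x_i$ spells out a step the paper only asserts by calling $M_k(y)$ the linearization of $m_k(x)m_k(x)^\top$.
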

    \begin{proofclaim}
         Observe that the feasible region $\widetilde{\LAS}_t$ is convex, and that it is invariant under $\gamma \in \Gamma$. 
         
        Indeed, $\gamma$ permutes the $2^n$ inequalities $g_I$ among themselves. For the matrix conditions themselves, $M_k(y)$ is the linearization of $m_k(x)\,m_k(x)^\top$. By Lemma~\ref{lemma:gamma-matrix}, $m_k(\gamma(x)) = C_\gamma\,m_k(x)$ for an invertible matrix $C_\gamma$. Consequently,
        \begin{equation*}
             M_k(\gamma \cdot y) = C_\gamma M_k(y)C_\gamma^\top.
        \end{equation*}
        Moreover, if $J \subseteq N$ is determined by $g_I(\gamma(x)) = g_J(x)$, then
        \begin{equation*}
            M_k(g_I \cdot (\gamma \cdot y)) = C_\gamma M_k(g_J \cdot y)C_\gamma^\top.
        \end{equation*}
        For any positive semidefinite matrix $M$ and any invertible matrix $C$, the matrix $C M C^\top$ is also positive semidefinite because $v^\top C M C^\top v = (C^\top v)^\top M(C^\top v) \geq 0$ for every $v$. Taking $k = t+1$ for the moment matrix and $k=t$ for the localizing matrices therefore shows that $\gamma$ preserves all the semidefinite conditions in \eqref{eq:lasserre-lifted}.
    
        Hence $\gamma \cdot y$ is feasible for every $\gamma \in \Gamma$, and by convexity so is the average $\bar{y} = \frac{1}{|\Gamma|} \sum_{\gamma \in \Gamma} \gamma \cdot y.$
    \end{proofclaim}

    It remains to identify $\bar{y}$, which satisfies $\bar{y}_\emptyset = 1$ and $\gamma \cdot \bar{y} = \bar{y}$ for every $\gamma \in \Gamma$, since averaging over all of $\Gamma$ and then applying one more map reproduces the same average. Applying \eqref{eq:flip} with $i \in I$, that invariance forces $\bar{y}_I = \bar{y}_{I \setminus \{i\}} - \bar{y}_I$, that is $\bar{y}_I = \frac{1}{2} \bar{y}_{I \setminus \{i\}}$, and induction on $|I|$ from $\bar{y}_\emptyset = 1$ gives $\bar{y}_I = (\frac{1}{2})^{|I|}$. So $\bar{y} = y^*$, which is therefore feasible for $\widetilde{\LAS}_t$.
\end{proofwithclaims}

\bigskip
Consequently, the Lasserre rank of $Q_{n,\rho}$ is the smallest $t$ at which $y^*$ fails one of the level-$t$ semidefinite conditions. This reduces the question to evaluating explicit matrices at a single rational vector.

We next show that the moment matrix alone cannot distinguish the cropped hypercube from its integer hull. For a vector $\alpha$ indexed by the subsets $\{I \subseteq N: |I| \leq k\}$, define the polynomial
\begin{equation*}
    p_\alpha(x) := \alpha^\top m_k(x) = \sum_{I \subseteq N: |I| \leq k} \alpha_I \prod_{i \in I} x_i.
\end{equation*}

\begin{lemma} \label{lemma:moment-psd}
    $M_k(y^*) \succeq 0$ for every $k$ and every $n$. More generally, if $g \geq 0$ at every point of $\{0, 1\}^n$, then $M_k(g \cdot y^*) \succeq 0$ for every $k$.
\end{lemma}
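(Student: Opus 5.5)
The plan is to use the decomposition $y^* = \sum_{v \in \{0,1\}^n} 2^{-n} \mu(v)$ noted before Lemma~\ref{lemma:symmetry}, together with linearity of the maps $y \mapsto M_k(y)$ and $y \mapsto M_k(g \cdot y)$. Since both the moment matrix and the shifted sequence in \eqref{eq:shifted-sequence} are linear in $y$, we obtain
\begin{equation*}
    M_k(g \cdot y^*) = \sum_{v \in \{0,1\}^n} 2^{-n} M_k(g \cdot \mu(v)) = \sum_{v \in \{0,1\}^n} 2^{-n} g(v) M_k(\mu(v)),
\end{equation*}
where the second equality is the identity $M_r(g \cdot \mu(v)) = g(v) M_r(\mu(v))$ recalled in Section~\ref{sec:lasserre-hierarchy}. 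Each $M_k(\mu(v))$ is the rank-one matrix $m_k(v) m_k(v)^\top$ and is therefore positive semidefinite. If $g(v) \geq 0$ for every $v \in \{0,1\}^n$, the sum is a nonnegative combination of positive semidefinite matrices, so it is positive semidefinite.

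The first assertion is the special case $g \equiv 1$, that is, $b = 1$ and $a = 0$. Then $g \cdot y = y$, and the formula gives $M_k(y^*) = 2^{-n}\sum_v m_k(v) m_k(v)^\top \succeq 0$. The same conclusion can be stated with the notation $p_\alpha$ just introduced:
\begin{equation*}
    \alpha^\top M_k(g \cdot y^*) \alpha = 2^{-n} \sum_{v \in \{0,1\}^n} g(v)\, p_\alpha(v)^2 \geq 0 .
\end{equation*}
In this form the quadratic form is the uniform average of $g\,p_\alpha^2$ over the hypercube vertices.

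This argument has no real obstacle. The only point to check is that $g(v)$ is evaluated as a function on $\{0,1\}^n$, so that the multilinear reduction $x_i^2 = x_i$ built into the indexing $y_{I\cup J}$ agrees with pointwise evaluation at $0$--$1$ points. That agreement is exactly the content of the identity $M_r(g\cdot\mu(v)) = g(v)M_r(\mu(v))$, and we take that identity as given.
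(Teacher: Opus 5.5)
Your proof is correct and is essentially the paper's argument. Both write $y^*$ as the uniform average of the vertex moment vectors $\mu(v)$ and arrive at $\alpha^\top M_k(g \cdot y^*)\alpha = 2^{-n}\sum_{v} g(v)\,p_\alpha(v)^2 \geq 0$; the paper expands the quadratic form directly, while you use linearity and the rank-one identity $M_k(g \cdot \mu(v)) = g(v)\, m_k(v) m_k(v)^\top$.
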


\begin{proof}
    The definition of $y^*$ gives
    \begin{equation*}
        y^*_{I \cup J} = \sum_{v \in \{0, 1\}^n} 2^{-n} \prod_{i \in I \cup J} v_i.
    \end{equation*}
    Fix any coefficient vector $\alpha$ indexed by $\{I: |I| \leq k\}$. Since $v_i^2 = v_i$ for $v \in \{0, 1\}^n$, we have $\prod_{i \in I} v_i \prod_{j \in J} v_j = \prod_{i \in I \cup J} v_i$. It follows that
    \begin{equation*}
        \alpha^\top M_k(y^*)\, \alpha = \sum_{I,J} \alpha_I \alpha_J \sum_{v} 2^{-n} \prod_{i \in I \cup J} v_i = \sum_v 2^{-n} \, p_\alpha(v)^2 \geq 0.
    \end{equation*}
    For the second claim, observe that for every $K \subseteq N$,
    \begin{equation*}
        (g \cdot y^*)_K = \sum_{v \in \{0, 1\}^n} 2^{-n} g(v) \prod_{i \in K} v_i.
    \end{equation*}
    The identical computation gives
    \begin{equation*}
        \alpha^\top M_k(g \cdot y^*) \alpha = \sum_{v \in \{0, 1\}^n} 2^{-n} g(v) p_\alpha(v)^2,
    \end{equation*}
    which is nonnegative whenever $g$ is nonnegative at every point of $\{0, 1\}^n$.
\end{proof}

Thus, the moment-matrix condition cannot make $y^*$ infeasible. The localizing matrices associated with inequalities valid for the 0--1 cube cannot make it infeasible either. In particular, the bound inequalities cannot cut off $y^*$. A cropping inequality $g_I(x) \geq 0$, however, is negative at the cube vertex that it cuts off. A certificate that the level-$t$ relaxation is empty is therefore a vector $\alpha$ indexed by the subsets of cardinality at most $t$ for which
\begin{equation*}
    \alpha^\top M_t(g_I \cdot y^*) \alpha = \sum_{v \in \{0, 1\}^n} g_I(v) 2^{-n} p_\alpha(v)^2 < 0.
\end{equation*}

We can reduce the search for such a certificate $\alpha$ to a single cropping inequality as we will show in the next lemma. For $I \subseteq N$, let $\gamma_I$ denote the coordinate map defined by
\begin{equation*}
    \gamma_I(x)_i :=
    \begin{cases}
      x_i, & i \in I,\\
      1 - x_i, & i \in N \setminus I.
    \end{cases}
\end{equation*}
     Applying $\gamma_I$ twice returns every coordinate to its original value. Thus, $\gamma_I$ is a self-inverse map on $\{0, 1\}^n$, and
\begin{equation*}
    g_I(\gamma_I(x)) = \sum_{i \in I} x_i + \sum_{i \in N \setminus I} x_i - \rho = \sum_{i=1}^n x_i - \rho = g_N(x).
\end{equation*}

Since $\gamma_I$ is a complementation, Lemma~\ref{lemma:gamma-matrix} gives an invertible matrix $C_{\gamma_I}$ such that $m_k(\gamma_I(x)) = C_{\gamma_I}\,m_k(x)$. Moreover, the inverse of $\gamma_I$ is itself, so applying \eqref{eq:gamma-matrix} twice gives
\begin{equation*}
    m_k(x) = m_k(\gamma_I(\gamma_I(x))) = C_{\gamma_I}^2\,m_k(x).
\end{equation*}
The entries of $m_k(x)$ form a basis for the space of multilinear polynomials of degree at most $k$. Therefore, the identity $m_k(x) = C_{\gamma_I}^2m_k(x)$ implies that $C_{\gamma_I}^2$ is the identity matrix. In particular, $C_{\gamma_I}^{-1} = C_{\gamma_I}$.

\begin{lemma} \label{lemma:certificate-symmetry}
    For every $I \subseteq N$ and every order $k$, there exists a vector $\alpha$ such that
    \begin{equation*}
        \alpha^\top M_k(g_I \cdot y^*)\alpha < 0
    \end{equation*}
    if and only if there exists a vector $\widetilde\alpha$ such that
    \begin{equation*}
        \widetilde\alpha^\top M_k(g_N \cdot y^*)\widetilde\alpha < 0,
    \end{equation*}
    where $g_N(x) = \sum_{i=1}^n x_i - \rho$.
\end{lemma}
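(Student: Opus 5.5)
The plan is to work with the sum formula from the proof of Lemma~\ref{lemma:moment-psd}, namely
\begin{equation*}
    \alpha^\top M_k(g \cdot y^*)\alpha = \sum_{v \in \{0,1\}^n} 2^{-n} g(v)\, p_\alpha(v)^2,
\end{equation*}
which holds for any linear $g$. The argument there only uses the expansion of $y^*$ as the uniform average of the vectors $\mu(v)$, not the sign of $g$. The idea is that the change of variables $v \mapsto \gamma_I(v)$ is a bijection of $\{0,1\}^n$ that preserves the uniform weights $2^{-n}$. It turns $g_I$ into $g_N$, and it turns the polynomial $p_\alpha$ into another multilinear polynomial of degree at most $k$ whose coefficient vector is related to $\alpha$ by the invertible matrix $C_{\gamma_I}$.

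For the forward direction, suppose $\alpha^\top M_k(g_I \cdot y^*)\alpha < 0$. I reindex the sum by $v = \gamma_I(w)$. Since $\gamma_I$ is self-inverse, this is a bijection of $\{0,1\}^n$, and it gives
\begin{equation*}
    \sum_{w} 2^{-n} g_I(\gamma_I(w))\, p_\alpha(\gamma_I(w))^2 = \sum_w 2^{-n} g_N(w)\, p_\alpha(\gamma_I(w))^2 .
\end{equation*}
Next, Lemma~\ref{lemma:gamma-matrix} gives $p_\alpha(\gamma_I(w)) = \alpha^\top m_k(\gamma_I(w)) = \alpha^\top C_{\gamma_I} m_k(w) = p_{\widetilde\alpha}(w)$, where $\widetilde\alpha := C_{\gamma_I}^\top \alpha$. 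The identity $m_k(\gamma_I(x)) = C_{\gamma_I} m_k(x)$ holds as polynomials, so it holds in particular at the points $w \in \{0,1\}^n$. Therefore the sum equals $\widetilde\alpha^\top M_k(g_N \cdot y^*)\widetilde\alpha$, which is negative.

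For the converse, I run the same computation with the roles of $I$ and $N$ exchanged. Starting from $\widetilde\alpha$ with $\widetilde\alpha^\top M_k(g_N\cdot y^*)\widetilde\alpha<0$, I set $\alpha := C_{\gamma_I}^\top \widetilde\alpha$. Because $\gamma_I$ is self-inverse, $g_N(\gamma_I(w)) = g_I(w)$ (apply $g_I(\gamma_I(x)) = g_N(x)$ at $x=\gamma_I(w)$). The same reindexing then gives $\alpha^\top M_k(g_I\cdot y^*)\alpha = \widetilde\alpha^\top M_k(g_N\cdot y^*)\widetilde\alpha < 0$. Equivalently, one can invert directly using $C_{\gamma_I}^{-1}=C_{\gamma_I}$.

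No step is a real obstacle. The only point needing care is to justify that the quadratic-form identity from Lemma~\ref{lemma:moment-psd} holds for arbitrary linear $g$, including the sign-indefinite $g_I$. This amounts to checking that $(g\cdot y^*)_K = \sum_v 2^{-n} g(v)\prod_{i\in K}v_i$ follows from linearity of the shift in \eqref{eq:shifted-sequence} together with $v_i^2=v_i$. The alternative route through the congruence $M_k(g_I\cdot(\gamma\cdot y)) = C_\gamma M_k(g_J\cdot y)C_\gamma^\top$ from the proof of Lemma~\ref{lemma:symmetry}, combined with $\gamma_I\cdot y^*=y^*$, would also work. The direct sum argument avoids tracking transposes in that identity.
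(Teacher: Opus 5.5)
Your proposal is correct and follows essentially the same route as the paper: express the quadratic form as $2^{-n}\sum_{v\in\{0,1\}^n} g(v)\,p_\alpha(v)^2$, reindex by the involution $\gamma_I$, and transform coefficients by $C_{\gamma_I}^\top$. The only cosmetic difference is that you reindex separately in each direction, whereas the paper proves a single identity and gets both directions from $C_{\gamma_I}^2$ being the identity.
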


\begin{proof}
    We first derive a relation between the corresponding quadratic forms. Fix $I \subseteq N$ and let $q$ be any vector indexed by $\{J \subseteq N: |J| \leq k\}$. Then,
    \begin{align*}
         q^\top M_k(g_N \cdot y^*)q
         & = 2^{-n} \sum_{v \in \{0, 1\}^n} g_N(v)\bigl(q^\top m_k(v)\bigr)^2 \\
         & = 2^{-n} \sum_{u \in \{0, 1\}^n} g_I(u)\bigl(q^\top m_k(\gamma_I(u))\bigr)^2 \\
         & = 2^{-n} \sum_{u \in \{0, 1\}^n} g_I(u)\bigl((C_{\gamma_I}^\top q)^\top m_k(u)\bigr)^2 \\
         & = (C_{\gamma_I}^\top q)^\top M_k(g_I \cdot y^*)(C_{\gamma_I}^\top q).
    \end{align*}    
    The second equality holds because $\gamma_I$ is a bijection of $\{0, 1\}^n$ and $g_N(\gamma_I(u)) = g_I(u)$.
    
    Now suppose that $\alpha$ is a certificate for $g_I$, such that $\alpha^\top M_k(g_I \cdot y^*)\alpha < 0$, and set $\widetilde\alpha := C_{\gamma_I}^\top \alpha$. Since $C_{\gamma_I}^2$ is the identity matrix, $(C_{\gamma_I}^\top)^2$ is also the identity matrix, and hence $C_{\gamma_I}^\top \widetilde\alpha = \alpha$. Applying the equality above with $q = \widetilde\alpha$ gives
    \begin{equation*}
        \widetilde\alpha^\top M_k(g_N \cdot y^*)\widetilde\alpha = \alpha^\top M_k(g_I \cdot y^*)\alpha < 0.
    \end{equation*}
    This proves that every certificate for $g_I$ produces a certificate for $g_N$. For the reverse direction, suppose that $\widetilde\alpha$ is a certificate for $g_N$ and set $\alpha := C_{\gamma_I}^\top \widetilde\alpha$. Applying the equality above with $q = \widetilde\alpha$ gives
    \begin{equation*}
        \alpha^\top M_k(g_I \cdot y^*)\alpha = \widetilde\alpha^\top M_k(g_N \cdot y^*)\widetilde\alpha < 0.
    \end{equation*}
\end{proof}

It therefore suffices to consider the single cropping inequality $g_N(x) = \sum_{i=1}^n x_i - \rho$.

\section{Reduction to a Least-Squares Problem} \label{sec:least-squares}

Having reduced the analysis to the single localizing matrix $M_t(g_N \cdot y^*)$, we now simplify the search for a negative certificate. We first show that it suffices to consider coefficient vectors that are constant on sets of the same cardinality. This reduces the quadratic form to an explicit $t$-dimensional optimization problem, which can then be written as a weighted least-squares problem.

\begin{lemma}\label{lemma:uniform-value-reduction}
    There exists a vector $\alpha$ indexed by $\{I \subseteq N: |I| \leq t\}$ such that $\alpha^\top M_t(g_N \cdot y^*)\alpha < 0$ if and only if there exists such a vector $\widetilde\alpha$ satisfying
    \begin{equation*}
        \widetilde\alpha_I = \widetilde\alpha_J \qquad \text{whenever } |I| = |J|.
    \end{equation*}
\end{lemma}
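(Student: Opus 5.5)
The reverse direction is trivial: a vector $\widetilde\alpha$ that is constant on cardinality classes is in particular a vector $\alpha$ with $\alpha^\top M_t(g_N \cdot y^*)\alpha < 0$. For the forward direction I plan to symmetrize a given certificate over the symmetric group $S_n$ of coordinate permutations. The key point is that the quadratic form $F(\alpha) := \alpha^\top M_t(g_N \cdot y^*)\alpha$ is invariant under permutations, and it is negative somewhere. Averaging a quadratic form is not directly averaging its values, so I will use the structure of the form rather than convexity of $F$.

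\textbf{Invariance.} By the computation in Lemma~\ref{lemma:moment-psd},
\begin{equation*}
F(\alpha) = 2^{-n}\sum_{v\in\{0,1\}^n} g_N(v)\, p_\alpha(v)^2 .
\end{equation*}
For a permutation $\sigma$, let $(\sigma\alpha)_I := \alpha_{\sigma(I)}$. Then $p_{\sigma\alpha}(v) = p_\alpha(\sigma^{-1} v)$ up to relabeling. Since $g_N(v) = |v| - \rho$ is permutation invariant and $\sigma$ is a bijection of the cube, we get $F(\sigma\alpha) = F(\alpha)$. Equivalently, $M_t(g_N\cdot y^*)$ commutes with the permutation matrices $P_\sigma$ from Lemma~\ref{lemma:gamma-matrix}.

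\textbf{Decomposition.} Let $V$ be the space of coefficient vectors indexed by $\{I : |I|\le t\}$. Let $\Pi := \frac{1}{n!}\sum_\sigma P_\sigma$ be the orthogonal projection onto the $S_n$-invariant subspace $V_0$. A vector lies in $V_0$ exactly when it is constant on each cardinality class. Write $A := M_t(g_N\cdot y^*)$. Because $A$ commutes with every $P_\sigma$, it commutes with $\Pi$. Hence $V = V_0 \oplus V_0^\perp$ is an $A$-invariant orthogonal decomposition, so $A$ is block diagonal. If $A$ is not PSD, then either $A|_{V_0}$ has a negative direction, which gives the desired $\widetilde\alpha$, or $A|_{V_0^\perp}$ does.

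\textbf{Main obstacle.} The real content is ruling out a negative direction that lives only in the non-invariant part $V_0^\perp$. Here I plan to use $F(\alpha)=2^{-n}\sum_v g_N(v)p_\alpha(v)^2$ more concretely. Group the vertices by weight $r = |v|$, so that
\begin{equation*}
F(\alpha) = 2^{-n}\sum_{r=0}^n (r-\rho)\sum_{|v|=r} p_\alpha(v)^2 .
\end{equation*}
Only $r=0$ contributes a negative coefficient, and the level $r = 0$ contains just the single point $v=0$, where $p_\alpha(0)=\alpha_\emptyset$. Now replace $\alpha$ by the orbit average $\widetilde\alpha := \Pi\alpha$. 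Then $p_{\widetilde\alpha}(0) = \alpha_\emptyset$, so the negative term is unchanged. For each $r\ge1$, the value $p_{\widetilde\alpha}(v)$ with $|v|=r$ is the average of $p_\alpha$ over the weight-$r$ slice. By Jensen (Cauchy--Schwarz),
\begin{equation*}
\sum_{|v|=r} p_{\widetilde\alpha}(v)^2 \le \sum_{|v|=r} p_\alpha(v)^2 .
\end{equation*}
Since $r-\rho>0$ for $r\ge1$, this yields $F(\widetilde\alpha)\le F(\alpha)<0$. So $\widetilde\alpha$ is the desired certificate, and the block-diagonal remark becomes unnecessary.

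The step I will check carefully is the identity $p_{\Pi\alpha}(v) = \frac{1}{\binom{n}{r}}\sum_{|u|=r}p_\alpha(u)$. This follows from $p_{\Pi\alpha}(v) = \frac{1}{n!}\sum_\sigma p_\alpha(\sigma v)$, together with the fact that $S_n$ acts transitively on each slice with uniform multiplicity.
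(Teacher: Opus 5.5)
Your proposal is correct and is essentially the paper's proof. Your orbit average $\Pi\alpha$ is exactly the paper's $\widetilde\alpha_I = \binom{n}{|I|}^{-1}\sum_{|J|=|I|}\alpha_J$, and the argument is the same: Cauchy--Schwarz on each weight slice, equality at $r=0$ because $\widetilde\alpha_\emptyset=\alpha_\emptyset$, and $r-\rho>0$ for $r\ge 1$. The only difference is that the paper checks equality of the slice sums by counting $\binom{n-\ell}{r-\ell}$, whereas you use transitivity of $S_n$ on each slice; as you note, the block-diagonal detour is unnecessary.
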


\begin{proof}
    The reverse implication is immediate, so it suffices to prove the forward implication.

    Given a certificate $\alpha$, define
    \begin{equation*}
        \widetilde\alpha_I := \frac{1}{\binom{n}{|I|}} \sum_{\substack{J \subseteq N \\ |J| = |I|}} \alpha_J, \qquad |I| \leq t.
    \end{equation*}
    By construction, $\widetilde\alpha_I$ depends only on $|I|$. Fix $r \in \{0, \ldots, n\}$ and let
    \begin{equation*}
        V_r := \left\{ v \in \{0, 1\}^n: \sum_{i=1}^n v_i = r \right\}.
    \end{equation*}
    For a set $I \subseteq N$ with $|I| = \ell$, the monomial $\prod_{i \in I} v_i$ equals one for exactly $\binom{n - \ell}{r - \ell}$ vectors $v \in V_r$. Therefore,
    \begin{align*}
        \sum_{v \in V_r} p_\alpha(v) &= \sum_{\ell = 0}^t \binom{n - \ell}{r - \ell} \sum_{\substack{I \subseteq N: |I| = \ell}} \alpha_I \\
        & = \sum_{\ell = 0}^t \binom{n - \ell}{r - \ell} \sum_{\substack{I \subseteq N: |I| = \ell}} \widetilde\alpha_I \\
        & = \sum_{v \in V_r} p_{\widetilde\alpha}(v),
    \end{align*}
    where $\binom{n - \ell}{r - \ell} = 0$ when $\ell > r$. Moreover, 
    \begin{equation*}
        \sum_{v \in V_r} p_{\widetilde\alpha}(v)^2 
        = \frac{1}{|V_r|}\left(\sum_{v \in V_r} p_{\widetilde\alpha}(v)\right)^2
        = \frac{1}{|V_r|}\left(\sum_{v \in V_r} p_\alpha(v)\right)^2 
        \leq \sum_{v \in V_r} p_\alpha(v)^2,
    \end{equation*}
    where the first equality holds because $p_{\widetilde\alpha}(v)$ has the same value for every $v \in V_r$, the second equality holds because $p_\alpha$ and $p_{\widetilde\alpha}$ have the same sum over $V_r$, and the inequality is Cauchy--Schwarz.

    For $r = 0$, equality holds because $\widetilde\alpha_\emptyset = \alpha_\emptyset$. Since $g_N(v) = r - \rho$ for every $v \in V_r$, and $r - \rho > 0$ for every $r \geq 1$, it follows that
    \begin{align*}
        \widetilde\alpha^\top M_t(g_N \cdot y^*)\widetilde\alpha &= 2^{-n} \sum_{r=0}^n \left(r - \rho\right) \sum_{v \in V_r} p_{\widetilde\alpha}(v)^2 \\
        & \leq 2^{-n} \sum_{r=0}^n \left(r - \rho\right) \sum_{v \in V_r} p_\alpha(v)^2 \\
        & = \alpha^\top M_t(g_N \cdot y^*)\alpha.
    \end{align*}
    Thus, if $\alpha^\top M_t(g_N \cdot y^*)\alpha < 0$, then also $\widetilde\alpha^\top M_t(g_N \cdot y^*)\widetilde\alpha < 0$.
\end{proof}

We may further assume that $\alpha_\emptyset = 1$. Indeed, if $\alpha_\emptyset = 0$, then $p_\alpha(\mathbf{0}) = 0$, where $\mathbf{0}$ denotes the all-zero vector. Since $g_N(v) = |\operatorname{supp}(v)| - \rho > 0$ for every $v \in \{0, 1\}^n \setminus \{\mathbf{0}\}$, we have

\begin{equation*}
    \alpha^\top M_t(g_N \cdot y^*)\alpha = 2^{-n} \sum_{v \in \{0, 1\}^n} g_N(v)p_\alpha(v)^2 = 2^{-n} \sum_{v \neq \mathbf{0}} g_N(v)p_\alpha(v)^2 \geq 0.
\end{equation*}

Thus every negative certificate has $\alpha_\emptyset \neq 0$. Replacing $\alpha$ by $\alpha / \alpha_\emptyset$ preserves the negativity of the quadratic form and normalizes the certificate so that $\alpha_\emptyset = 1$.

By Lemma~\ref{lemma:uniform-value-reduction}, we may assume that $\alpha_I$ depends only on $|I|$. Define $z \in \R^t$ by $z_j := \alpha_I$ for every $I \subseteq N$ with $|I| = j$, where $j = 1, \ldots, t$. If $v \in \{0, 1\}^n$ satisfies $\sum_{i=1}^n v_i = r$, then exactly $\binom{r}{j}$ monomials of degree $j$ evaluate to one at $v$. Consequently,
$p_\alpha(v) = 1 + \sum_{j=1}^t \binom{r}{j}z_j$. Note that this expression is now stated in terms of $z$ and $r$. It will be denoted by $p_z(r)$ in the next section.

There are $\binom{n}{r}$ vectors $v \in \{0, 1\}^n$ satisfying $\sum_i v_i = r$, and for each such vector, $g_N(v) = r - \rho$.

Therefore,
\begin{equation} \label{eq:reduced-objective}
        2^n \alpha^\top M_t(g_N \cdot y^*)\alpha = -\rho + \sum_{r=1}^n \binom{n}{r}(r - \rho) \left(1 + \sum_{j=1}^t \binom{r}{j} z_j \right)^2 =: f_{n,t}(z).
\end{equation}
For $r = 1, \ldots, n$, define $w_r := \binom{n}{r}(r - \rho)$. For each $t$, let $A_t \in \R^{n \times t}$ and $b \in \R^n$ be defined by
\begin{equation} \label{eq:Arj}
    (A_t)_{r,j} := \sqrt{w_r}\binom{r}{j}, \qquad b_r := -\sqrt{w_r},
\end{equation}
for $r = 1, \ldots, n$ and $j = 1, \ldots, t$. Then
\begin{equation*}
    (A_t z - b)_r = \sqrt{w_r} \left(1 + \sum_{j=1}^t \binom{r}{j} z_j \right),
\end{equation*}
so that $f_{n,t}(z) = \lVert A_t z - b \rVert_2^2 - \rho$. We can now state the least-squares characterization.

\begin{theorem}\label{theorem:least-squares}
    The Lasserre rank of $Q_{n,\rho}$ is the smallest integer $t$ such that
    \begin{equation*}
        \min_{z \in \R^t} \lVert A_t z - b \rVert_2^2 < \rho,
    \end{equation*}
    where $A_t$ and $b$ are defined in \eqref{eq:Arj}.
\end{theorem}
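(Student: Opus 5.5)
The plan is to chain together the reductions of Sections~\ref{sec:symmetry} and~\ref{sec:least-squares}. First, fix $t$ and show that $\LAS_t = \emptyset$ if and only if $\min_{z \in \R^t} \lVert A_t z - b\rVert_2^2 < \rho$. By Lemma~\ref{lemma:symmetry}, $\LAS_t$ is empty exactly when $y^* \notin \widetilde{\LAS}_t$. Since $y^*_\emptyset = 1$ holds trivially, this happens exactly when one of the semidefinite conditions fails at $y^*$. By Lemma~\ref{lemma:moment-psd}, $M_{t+1}(y^*) \succeq 0$ always holds. Hence $y^* \notin \widetilde{\LAS}_t$ if and only if some localizing matrix $M_t(g_I \cdot y^*)$ fails to be positive semidefinite. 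That is, some $\alpha$ satisfies $\alpha^\top M_t(g_I\cdot y^*)\alpha<0$. By Lemma~\ref{lemma:certificate-symmetry}, this is equivalent to the existence of such an $\alpha$ for the single inequality $g_N$.

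Next, I apply Lemma~\ref{lemma:uniform-value-reduction} together with the normalization argument following it. This lets us restrict to certificates with $\alpha_\emptyset=1$ whose entries $\alpha_I$ depend only on $|I|$. Such vectors are in bijection with $z\in\R^t$ via $z_j=\alpha_I$ for $|I|=j$. The computation \eqref{eq:reduced-objective} then gives
\[
2^n\alpha^\top M_t(g_N\cdot y^*)\alpha=f_{n,t}(z)=\lVert A_tz-b\rVert_2^2-\rho .
\]
A negative certificate therefore exists if and only if $\lVert A_t z - b\rVert_2^2<\rho$ for some $z$. The infimum of this convex quadratic is attained, as for any least-squares problem. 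So the condition is exactly $\min_z \lVert A_tz-b\rVert_2^2<\rho$. For $t=0$, I read $z$ as empty, so the minimum is $\lVert b\rVert^2=\sum_r w_r>\rho$. This is consistent with $\LAS_0=Q_{n,\rho}\neq\emptyset$ and needs no separate argument beyond the equivalence.

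Finally, I translate emptiness into the rank. Since $S_{n,\rho}=\emptyset$, we have $\operatorname{conv}(S_{n,\rho})=\emptyset$. By definition, the Lasserre rank is then the least $t$ with $\LAS_t=\emptyset$. By the equivalence above, this is the least $t$ with $\min_z\lVert A_tz-b\rVert^2<\rho$. The nested property of the hierarchy, with $\LAS_n=\emptyset$, guarantees that such a $t\le n$ exists. Monotonicity is also visible directly: $A_{t+1}$ extends $A_t$ by a column, so the minimum is nonincreasing in $t$.

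The main point requiring care is the chain of equivalences in the first step. Lemma~\ref{lemma:certificate-symmetry} must be applied with order $k=t$, and the constraint $y_\emptyset=1$ must be used correctly. The uniform-value and normalization reductions also need to preserve strict negativity in both directions. The reverse directions are immediate because the restricted vectors are themselves valid $\alpha$'s, so I expect no real obstacle. The proof is essentially a bookkeeping assembly of the preceding lemmas.
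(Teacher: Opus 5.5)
Your proposal is correct and follows essentially the same chain of reductions as the paper's proof: the symmetry lemma, positive semidefiniteness of the moment matrix at $y^*$, reduction to the single inequality $g_N$, averaging to uniform values, normalizing to $\alpha_\emptyset=1$, and the identity \eqref{eq:reduced-objective}. Your extra remarks are valid details that the paper leaves implicit: that the least-squares minimum is attained, the $t=0$ case, and that some $t\le n$ qualifies because $\LAS_n=\emptyset$.
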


\begin{proof}
    By Lemmas~\ref{lemma:symmetry}, \ref{lemma:moment-psd}, and \ref{lemma:certificate-symmetry}, the level-$t$ Lasserre relaxation is empty if and only if there exists $\alpha$ such that $\alpha^\top M_t(g_N \cdot y^*)\alpha < 0$. By Lemma~\ref{lemma:uniform-value-reduction}, we may restrict attention to vectors whose entries depend only on the cardinality of the indexing set. Moreover, every negative certificate has $\alpha_\emptyset \neq 0$, so we may normalize to $\alpha_\emptyset = 1$. Thus such certificates are parametrized by $z \in \R^t$, and by~\eqref{eq:reduced-objective},
    \begin{equation*}
        2^n \alpha^\top M_t(g_N \cdot y^*)\alpha = \lVert A_tz - b \rVert_2^2 - \rho.
    \end{equation*}
    Hence the level-$t$ relaxation is empty if and only if
    \begin{equation*}
        \min_{z \in \R^t} \lVert A_tz - b \rVert_2^2 < \rho.
    \end{equation*}
    The result follows by taking the smallest such $t$.
\end{proof}

For $\rho=1/2$, solving the least-squares problems in Theorem~\ref{theorem:least-squares} gives Lasserre ranks $6$, $53$, and $131$ for $n = 10$, $100$, and $250$, respectively. For larger values of $n$, the weights $w_r = \binom{n}{r}(r-\rho)$ grow rapidly, making the direct least-squares formulation numerically intractable.

\section{A Recurrence for Computing the Lasserre Rank} \label{sec:rank-computation}

As shown in Theorem~\ref{theorem:least-squares}, computing the Lasserre rank of the cropped hypercube $Q_{n,\rho}$ amounts to finding the first integer $t$ for which the minimum of $f_{n,t}(z)$ is negative. Using \eqref{eq:reduced-objective}, we have 
\begin{equation} \label{eq:function}
    f_{n,t}(z) = \sum_{r=0}^n \binom{n}{r}(r - \rho) p_z(r)^2,
\end{equation}
where $p_z(0) = 1$ and, for $0 \leq t \leq n$ and $z \in \R^t$, $p_z$ is the univariate polynomial
\begin{equation} \label{eq:p-z-definition-rank}
    p_z(x) := 1 + \sum_{j=1}^t \binom{x}{j} z_j.
\end{equation}
The polynomial $p_z$ is the univariate version of the symmetric multivariate polynomial $p_\alpha$ introduced in Section~\ref{sec:symmetry}. For the associated coefficient vectors $\alpha$ and $z$, we have $p_\alpha(v) = p_z(r)$ whenever $\sum_{i=1}^n v_i = r$.
In the remainder of this paper, we will need to extend the formula $\binom{x}{j} := x(x - 1)\cdots(x - j + 1)/j!$ to nonintegral scalars $x$. When $t = 0$, the sum in \eqref{eq:p-z-definition-rank} is empty and $p_z(x) = 1$.

Denote the minimum of $f_{n,t}$ by $f_{n,t}^* := \min_{z \in \R^t} f_{n,t}(z)$. 
The next theorem shows that the first negative value of $f_{n,t}^*$ can be identified through a scalar recurrence.

\begin{theorem} \label{theorem:exact-lasserre-rank}
    For every $n \geq 2$ and $0<\rho<1$, define
    \begin{equation} \label{eq:recurrence}
        \Delta_{-1} := 1, \qquad \Delta_0 := n - 2\rho, \qquad \Delta_t := (n - 2\rho)\Delta_{t - 1} - t(n - t + 1)\Delta_{t - 2} \quad (1 \leq t \leq n).
    \end{equation}
    Then
    \begin{equation}\label{eq:exact-rank-recurrence}
        \operatorname{rank}_{\LAS}(Q_{n,\rho}) = \min \bigl\{0 \leq t \leq n \mathrel{\big|} \Delta_t < 0\bigr\}.
    \end{equation}
    In particular, the exact rank can be computed using at most $n$ recurrence steps. 
\end{theorem}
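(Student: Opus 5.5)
The plan is to turn the sign of $f_{n,t}^*$ into the sign of a Gram determinant, and then compute that determinant in a Krawtchouk basis, where it becomes tridiagonal.

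\textbf{Step 1: a weighted Gram matrix.} By Theorem~\ref{theorem:least-squares} and \eqref{eq:function}, the rank is the least $t$ with $f_{n,t}^* < 0$. On polynomials of degree at most $t$, consider the symmetric bilinear form
\begin{equation*}
\langle p, q\rangle_\rho := \sum_{r=0}^n \binom{n}{r}(r-\rho)\,p(r)\,q(r).
\end{equation*}
Then $f_{n,t}(z) = \langle p_z, p_z\rangle_\rho$ with $p_z = 1 + \sum_j \binom{x}{j} z_j$.

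Let $H_t$ be the $(t+1)\times(t+1)$ Gram matrix of $\langle\cdot,\cdot\rangle_\rho$ in the basis $1, \binom{x}{1}, \dots, \binom{x}{t}$. Let $H_t'$ be its lower-right $t \times t$ block, corresponding to $\binom{x}{j}$ with $j \ge 1$.

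\textbf{Step 2: sign of the minimum equals sign of $\det H_t$.} The polynomials $\binom{x}{j}$ with $j\ge 1$ vanish at $0$. Hence on their span the form only sees $r = 1,\dots,n$, where the weights $\binom{n}{r}(r-\rho)$ are positive. These polynomials are linearly independent as functions on $\{1,\dots,n\}$ for $t\le n$, since they form a triangular system in $r$. So $H_t' \succ 0$.

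Minimizing the quadratic form with the first coordinate fixed to $1$ gives the Schur complement:
\begin{equation*}
f_{n,t}^* = \frac{\det H_t}{\det H_t'}.
\end{equation*}
Therefore $f_{n,t}^* < 0$ if and only if $\det H_t < 0$. Changing to any other basis of polynomials of degree at most $t$ by a triangular matrix with unit-free nonzero diagonal multiplies $\det H_t$ by a positive square, so the sign is basis-independent.

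\textbf{Step 3: the Krawtchouk basis.} Next I would take the Krawtchouk polynomials $K_0,\dots,K_t$ for parameter $1/2$. These are orthogonal for the weight $\binom{n}{r}$ on $\{0,\dots,n\}$, with $\sum_r \binom{n}{r} K_i(r) K_j(r) = 2^n \binom{n}{i}\delta_{ij}$. They also satisfy a three-term recurrence of the form
\begin{equation*}
(n-2x)K_k(x) = (k+1)K_{k+1}(x) + (n-k+1)K_{k-1}(x).
\end{equation*}
Since $\langle p,q\rangle_\rho = \sum_r \binom{n}{r}(r-\rho)p(r)q(r)$, the recurrence shows that the Gram matrix in this basis is tridiagonal:
\begin{equation*}
\langle K_i, K_i\rangle_\rho = 2^{n}\binom{n}{i}\left(\tfrac{n}{2} - \rho\right), \qquad \langle K_i, K_{i-1}\rangle_\rho = -2^{n-1}\, i \binom{n}{i}.
\end{equation*}
The off-diagonal value uses $\binom{n}{i-1}(n-i+1) = i\binom{n}{i}$.

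\textbf{Step 4: expanding the determinant.} I would expand the tridiagonal determinant along its last row, and then rescale by the positive factor $\prod_{i\le t} 2^{n-1}\binom{n}{i}$, or equivalently conjugate by a positive diagonal matrix. After this normalization the diagonal entries become $n - 2\rho$. Each off-diagonal product becomes proportional to $i^2\binom{n}{i}^2 \big/ \bigl(\binom{n}{i}\binom{n}{i-1}\bigr) = i(n-i+1)$.

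This should reproduce exactly the recurrence \eqref{eq:recurrence}, with $\Delta_t$ a positive multiple of $\det H_t$ and $\Delta_{-1}=1$ as the empty determinant. As a check, $t=0$ gives $\Delta_0 = n - 2\rho > 0$, consistent with $f_{n,0} = n2^{n-1} - \rho 2^n > 0$.

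\textbf{Conclusion.} Since $\LAS_n = \operatorname{conv}(S_{n,\rho}) = \emptyset$, some $t\le n$ has $\Delta_t<0$, so the minimum in \eqref{eq:exact-rank-recurrence} is well defined and equals the rank.

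\textbf{Main obstacle.} The real difficulty is Step 4's bookkeeping: pinning down the correct normalization of the Krawtchouk recurrence and the diagonal rescaling so that the constants come out as exactly $n-2\rho$ and $t(n-t+1)$, rather than some positive multiple. I would first verify the constants on $n=2,3$ against the least-squares formulation before finalizing.
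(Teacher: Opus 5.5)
Your proposal is correct and is essentially the paper's argument: in the Krawtchouk basis the form $\sum_r\binom{n}{r}(r-\rho)p(r)q(r)$ is tridiagonal, and your diagonal rescaling gives exactly the paper's matrix $H_t$ (diagonal $n-2\rho$, off-diagonal entries $-\sqrt{i(n-i+1)}$), so the normalization you flagged as the main obstacle already comes out right. The remaining differences are minor:
\begin{itemize}
\item You get $\operatorname{sgn}(f^*_{n,t})=\operatorname{sgn}(\Delta_t)$ from the Schur-complement identity $f^*_{n,t}=\det H_t/\det H_t'$ for your binomial-basis Gram matrix, together with invariance of the determinant's sign under congruence. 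The paper instead block-diagonalizes with the Lagrange multiplier directly in the Krawtchouk basis.
\item You get nonemptiness of the set on the right-hand side from $\LAS_n=\emptyset$, rather than from the paper's explicit choice $z_j=(-1)^j$.
\item At $t=n$ you should also note, as the paper does, that the recurrence for $K_n^n$ is applied with $K_{n+1}^n$ vanishing on $\{0,\dots,n\}$.
\end{itemize}
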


As an example, consider $n = 3$ and $\rho=1/2$. The recurrence in \eqref{eq:recurrence} gives
\begin{equation*}
    \Delta_0 = 2, \qquad \Delta_1 = 1, \qquad \Delta_2 = -6.
\end{equation*}
It follows from Theorem~\ref{theorem:exact-lasserre-rank} that the Lasserre rank of $Q_{3,1/2}$ is $2$. This matches the rank observed in Figure~\ref{fig:Q_3-lasserre}. 

The endpoint $t = n$ in~\eqref{eq:exact-rank-recurrence} is essential. For example, when $n=2$ and $\rho=1/4$, the recurrence gives $\Delta_0=3/2$, $\Delta_1=1/4$, and $\Delta_2=-21/8$, so the rank is $2$. 

\bigskip
To derive Theorem~\ref{theorem:exact-lasserre-rank}, we relate the minimization problem $\min_{z \in \R^t} f_{n,t}(z)$ to the sign change of a determinant. We first introduce the underlying square matrix $H_t$, as well as a related family of polynomials (\cite{krawtchouk}).

\bigskip
Fix integers $n \geq 2$ and $0 \leq t \leq n$. Let $H_t$ be the $(t + 1) \times (t + 1)$ symmetric tridiagonal matrix indexed by $0, \ldots, t$ whose nonzero entries are
\begin{align}
    (H_t)_{jj} & = n - 2\rho && (0 \leq j \leq t), \label{eq:definition-Ht-diagonal} \\
    (H_t)_{j,j + 1} = (H_t)_{j + 1,j} & = -\sqrt{(j + 1)(n - j)} && (0 \leq j \leq t - 1). \label{eq:definition-Ht-off-diagonal}
\end{align}

Let $0 \leq j \leq n$  be an integer. The {\it binary Krawtchouk polynomials of degree $j$} are defined as 
\begin{equation} \label{eq:krawtchouk-definition}
    K_j^{n}(x) := \sum_{i=0}^{j} (-1)^i \binom{x}{i} \binom{n-x}{j-i}.
\end{equation}
Throughout this paper, we refer to these polynomials simply as Krawtchouk polynomials. They can be obtained using a generating function:
\begin{equation} \label{eq:krawtchouk-definition-generating}
    K_j^{n}(x) = [u^j](1 - u)^x(1 + u)^{n - x}, \qquad 0 \leq j \leq n,
\end{equation}
where $[u^j]F(u)$ denotes the coefficient of $u^j$ in the power series of $F$ (see \cite{macwilliams1977theory} Chapter~5, Section~7, p.~151). We work with the normalized polynomials for fixed $n$
\begin{equation*}
    \phi_j(x) := \frac{K_j^{n}(x)}{\sqrt{\binom{n}{j}}}, \qquad 0 \leq j \leq n.
\end{equation*}

The following properties from Chapter~5, Section~7, pp.~150--152 of \cite{macwilliams1977theory} will be used throughout this section.
\begin{enumerate}[label=\roman*.]
    \item The polynomial $\phi_j$ has degree $j$ and satisfies $\phi_j(0) = \sqrt{\binom{n}{j}}$. 
    
    Consequently, $\phi_0, \ldots, \phi_t$ form a basis of the vector space $\R_t[x]$ of real polynomials of degree at most $t$.  
    \item The polynomials $\phi_0, \ldots, \phi_n$ are orthonormal with respect to the binomial distribution.
    \begin{equation}\label{eq:krawtchouk-orthogonality}
        \E_{X \sim \operatorname{Bin}(n, 1/2)}[\phi_i(X)\phi_j(X)] = 2^{-n} \sum_{r=0}^n \binom{n}{r} \phi_i(r)\phi_j(r) = \delta_{ij}, \qquad 0 \leq i,j \leq n,
    \end{equation}
    where $\delta_{ij} =1$ if $i=j$ and 0 otherwise.
    \item The polynomials satisfy the three-term recurrence
    \begin{equation}\label{eq:krawtchouk-recurrence}
        (n - 2x)\phi_j(x) = \sqrt{(j + 1)(n - j)}\,\phi_{j + 1}(x) + \sqrt{j(n - j + 1)}\,\phi_{j - 1}(x),
    \end{equation}
    for $0 \leq j \leq n$, where $\phi_{-1} := 0$. Note that when $j = n$, we set the term $\sqrt{(j + 1)(n - j)}\,\phi_{j + 1}(x) = 0$.
\end{enumerate}

Define $\nu \in \R^{t + 1}$ by $\nu_j := \sqrt{\binom{n}{j}}$ for $0 \leq j \leq t$.

\begin{lemma}\label{lem:reduce-to-constrained-minimization}
    For any integers $n \geq 2$ and $1 \leq t \leq n$,
    \begin{equation}\label{eq:constrained-H-rank}
        f_{n,t}^* = 2^{n - 1} \min \left\{\beta^{\top} H_t \beta \mathrel{\big|} \nu^{\top}\beta = 1,\ \beta \in \R^{t + 1}\right\}.
    \end{equation}
\end{lemma}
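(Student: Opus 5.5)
The plan is to expand $p_z$ in the orthonormal Krawtchouk basis and recognize $f_{n,t}$ as a quadratic form in the expansion coefficients.

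\textbf{Step 1: change of variables.} By property (i), $\phi_0,\ldots,\phi_t$ form a basis of $\R_t[x]$. So every polynomial $p$ of degree at most $t$ can be written uniquely as $p=\sum_{j=0}^t \beta_j\phi_j$ with $\beta\in\R^{t+1}$.

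The polynomials $p_z$ with $z\in\R^t$ are exactly the polynomials of degree at most $t$ with $p(0)=1$. Indeed, $\binom{x}{j}$ for $j=1,\ldots,t$ together with $1$ form a basis of $\R_t[x]$, and $\binom{0}{j}=0$ for $j\ge 1$.

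Since $\phi_j(0)=\sqrt{\binom nj}=\nu_j$, the constraint $p(0)=1$ becomes $\nu^\top\beta=1$. Hence the map $z\mapsto\beta$ is an affine bijection between $\R^t$ and the hyperplane $\{\beta:\nu^\top\beta=1\}$.

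\textbf{Step 2: rewrite the objective.} By \eqref{eq:function},
\[
f_{n,t}(z)=2^n\,\E_{X\sim\operatorname{Bin}(n,1/2)}\bigl[(X-\rho)\,p(X)^2\bigr].
\]
Write $X-\rho=\tfrac12(n-2\rho)-\tfrac12(n-2X)$. Then
\[
f_{n,t}=2^{n-1}\Bigl((n-2\rho)\,\E[p(X)^2]-\E\bigl[(n-2X)\,p(X)^2\bigr]\Bigr).
\]
By orthonormality \eqref{eq:krawtchouk-orthogonality}, $\E[p(X)^2]=\|\beta\|^2$. This accounts for the diagonal entries $n-2\rho$ of $H_t$.

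\textbf{Step 3: the multiplication term.} Compute
\[
\E\bigl[(n-2X)p(X)^2\bigr]=\sum_{i,j}\beta_i\beta_j\,\E\bigl[(n-2X)\phi_i(X)\phi_j(X)\bigr].
\]
Apply the recurrence \eqref{eq:krawtchouk-recurrence} to $(n-2X)\phi_j(X)$ and then orthonormality. This gives
\[
\E\bigl[(n-2X)\phi_i\phi_j\bigr]=\sqrt{(j+1)(n-j)}\,\delta_{i,j+1}+\sqrt{j(n-j+1)}\,\delta_{i,j-1}.
\]
The quadratic form is therefore $2\sum_{j=0}^{t-1}\sqrt{(j+1)(n-j)}\,\beta_j\beta_{j+1}$.

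One point needs care. When $j=t$, the recurrence produces $\phi_{t+1}$, which lies outside the span of $\phi_0,\ldots,\phi_t$. Its inner product with $\phi_i$ for $i\le t$ vanishes by orthogonality, since $t+1\le n$ whenever $t<n$. When $t=n$, the coefficient of $\phi_{n+1}$ is zero by convention.

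Combining Steps 2 and 3 gives $f_{n,t}=2^{n-1}\beta^\top H_t\beta$, with off-diagonal entries $-\sqrt{(j+1)(n-j)}$ as in \eqref{eq:definition-Ht-off-diagonal}. Minimizing over the bijectively corresponding sets yields \eqref{eq:constrained-H-rank}.

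The main obstacle is the boundary bookkeeping in Step 3. One has to confirm that the truncation at degree $t$ introduces no extra diagonal or boundary terms. One also has to confirm that the recurrence holds as stated at $j=n$ as an identity on the support $\{0,\ldots,n\}$. This suffices, because the expectation only evaluates $\phi_j$ at the points $0,\ldots,n$.
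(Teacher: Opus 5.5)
Your proposal is correct and follows essentially the same route as the paper: the bijection $z\mapsto\beta$ through the Krawtchouk basis using $\phi_j(0)=\nu_j$, the splitting $X-\rho=\tfrac12(n-2\rho)-\tfrac12(n-2X)$, orthonormality for the diagonal, and the three-term recurrence for the off-diagonal entries. Your boundary remarks also match the paper's treatment: $\phi_{t+1}$ is orthogonal to $\phi_0,\ldots,\phi_t$ when $t<n$, and at $j=n$ the recurrence is read as the terminal identity $(n-2X)\phi_n(X)=\sqrt{n}\,\phi_{n-1}(X)$ on $\{0,\ldots,n\}$.
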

\begin{proofwithclaims}

    \begin{claim}
      There is a bijection between $\R^t$ and the affine space $\{\beta \in \R^{t + 1} \mid \nu^{\top}\beta = 1\}$. 
    \end{claim}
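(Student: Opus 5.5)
The plan is to re-express the polynomial $p_z$ from \eqref{eq:p-z-definition-rank} in the orthonormal Krawtchouk basis $\phi_0,\ldots,\phi_t$. In that basis the weighted sum \eqref{eq:function} becomes a quadratic form whose matrix is tridiagonal, and this matrix should be exactly $H_t$.

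\textbf{The bijection (the Claim).} The map $z \mapsto p_z$ is an affine bijection from $\R^t$ onto the affine space $\{p \in \R_t[x] : p(0) = 1\}$. The reason is that $\binom{x}{0}=1,\binom{x}{1},\ldots,\binom{x}{t}$ is a basis of $\R_t[x]$, and $\binom{0}{j}=0$ for $j\ge 1$. By property (i), every $p \in \R_t[x]$ has a unique expansion $p=\sum_{j=0}^t \beta_j\phi_j$. Since $\phi_j(0)=\nu_j$, we get $p(0)=\nu^\top\beta$. Composing the two linear identifications gives an affine bijection $z\leftrightarrow\beta$ between $\R^t$ and $\{\beta\in\R^{t+1} : \nu^\top\beta=1\}$, characterized by $p_z=\sum_j\beta_j\phi_j$.

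\textbf{Rewriting the objective.} Next I would write the weight as
\begin{equation*}
r-\rho = \tfrac12(n-2\rho) - \tfrac12(n-2r).
\end{equation*}
By \eqref{eq:function}, which already includes the $r=0$ term $-\rho\, p_z(0)^2=-\rho$, this gives
\begin{equation*}
f_{n,t}(z) = 2^{n-1}\Bigl((n-2\rho)\,\E[p_z(X)^2] - \E[(n-2X)\,p_z(X)^2]\Bigr), \qquad X\sim\operatorname{Bin}(n,1/2).
\end{equation*}
By orthonormality \eqref{eq:krawtchouk-orthogonality}, the first expectation equals $\|\beta\|^2$.

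For the second expectation, expand it as $\sum_{i,j}\beta_i\beta_j\,\E[(n-2X)\phi_i(X)\phi_j(X)]$. Apply the three-term recurrence \eqref{eq:krawtchouk-recurrence} to $(n-2X)\phi_j$, and then use orthonormality again. The result is
\begin{equation*}
\E[(n-2X)\phi_i\phi_j] = \sqrt{(j+1)(n-j)} \ \text{ if } i=j+1, \qquad \sqrt{j(n-j+1)} \ \text{ if } i=j-1,
\end{equation*}
and $0$ otherwise. These entries are symmetric in $i$ and $j$, and they match the negatives of the off-diagonal entries \eqref{eq:definition-Ht-off-diagonal}. Combining the two pieces gives $f_{n,t}(z)=2^{n-1}\beta^\top H_t\beta$.

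\textbf{Main obstacle.} The step needing the most care is the boundary of the recurrence at $j=t$. Here $(n-2X)\phi_t$ contains a $\phi_{t+1}$ term that lies outside the span of $\phi_0,\ldots,\phi_t$. I must check that this term contributes nothing to the inner products with $\phi_i$ for $i\le t$. That holds by orthogonality when $t<n$, and by the stated convention that the coefficient vanishes when $t=n$. Once this is settled, taking minima on both sides over corresponding points of the bijection gives \eqref{eq:constrained-H-rank}. The minima correspond because the bijection is onto, so the two infima coincide and are attained simultaneously.
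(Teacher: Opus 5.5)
Your argument for the Claim is correct and essentially the same as the paper's: both identify $\R^t$ and $\{\beta : \nu^\top\beta = 1\}$ with the affine space $\{p \in \R_t[x] : p(0)=1\}$, using the bases $1,\binom{x}{1},\ldots,\binom{x}{t}$ and $\phi_0,\ldots,\phi_t$ together with $\phi_j(0)=\nu_j$. Your explicit remark that $\binom{0}{j}=0$ for $j\geq 1$ forces the constant coefficient to equal $1$ spells out a step that the paper's converse direction leaves implicit.
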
    
    \begin{proofclaim}
        Given $z \in \R^t$, the basis property of $\phi_0, \ldots, \phi_t$ gives a unique vector $\beta = (\beta_0, \ldots, \beta_t)^{\top} \in \R^{t + 1}$ such that
        \begin{equation}\label{eq:p_z-expression}
            p_z(x) = \sum_{j=0}^t \beta_j\phi_j(x).
        \end{equation}
        Since $p_z(0) = 1$ and $\phi_j(0) = \nu_j$, we have $\nu^{\top}\beta = 1$.
        
        Conversely, let $\beta \in \R^{t + 1}$ satisfy $\nu^{\top}\beta = 1$ and define $q_\beta(x) := \sum_{j=0}^t \beta_j\phi_j(x)$. Then $q_\beta \in \R_t[x]$ and $q_\beta(0) = 1$. The polynomials $1, \binom{x}{1}, \ldots, \binom{x}{t}$ also form a basis of $\R_t[x]$, so there is a unique $z \in \R^t$ such that
        \begin{equation*}
            q_\beta(x) = 1 + \sum_{j=1}^t \binom{x}{j} z_j.
        \end{equation*}
        Thus, $z \mapsto \beta$ gives the claimed bijection.
    \end{proofclaim}


    For every $\beta \in \R^{t + 1}$, define $q_\beta(x) := \sum_{j=0}^t \beta_j\phi_j(x)$.
    \begin{claim}
        For any $0\leq t\leq n$ and any $\beta \in \R^{t+1}$,
        \begin{equation}\label{eq:H-quadratic-form-rank}
            \beta^{\top}H_t\beta = 2^{-n}\sum_{r=0}^n \binom{n}{r}(2r-2\rho)q_\beta(r)^2.
        \end{equation}
    \end{claim}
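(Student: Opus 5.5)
The plan is to expand the right-hand side of \eqref{eq:H-quadratic-form-rank} in the Krawtchouk basis and match it entrywise against $H_t$. Write $2r - 2\rho = (2r - n) + (n - 2\rho)$ and substitute $q_\beta(r) = \sum_{j=0}^t \beta_j \phi_j(r)$. This gives
\begin{equation*}
    2^{-n}\sum_{r=0}^n \binom{n}{r}(2r-2\rho)q_\beta(r)^2
    = (n-2\rho)\, 2^{-n}\sum_{r=0}^n \binom{n}{r} q_\beta(r)^2
    - \sum_{i,j=0}^t \beta_i\beta_j\, 2^{-n}\sum_{r=0}^n \binom{n}{r}(n-2r)\phi_i(r)\phi_j(r).
\end{equation*}

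For the first term we use the orthonormality \eqref{eq:krawtchouk-orthogonality}: since $t \le n$, the first sum equals $\sum_{j=0}^t \beta_j^2$. It therefore contributes $(n-2\rho)\sum_j \beta_j^2$, which accounts exactly for the diagonal \eqref{eq:definition-Ht-diagonal}.

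For the second term we apply the three-term recurrence \eqref{eq:krawtchouk-recurrence} to $(n-2r)\phi_j(r)$, obtaining
\begin{equation*}
    (n-2r)\phi_j(r) = \sqrt{(j+1)(n-j)}\,\phi_{j+1}(r) + \sqrt{j(n-j+1)}\,\phi_{j-1}(r).
\end{equation*}
We then use orthonormality again. The inner product $2^{-n}\sum_r \binom{n}{r}(n-2r)\phi_i(r)\phi_j(r)$ equals $\sqrt{(j+1)(n-j)}$ when $i = j+1$, equals $\sqrt{j(n-j+1)}$ when $i = j-1$, and equals $0$ otherwise. This matrix is symmetric: the $(j-1,j)$ entry $\sqrt{j(n-j+1)}$ coincides with the $(j-1,j)$ entry obtained from the first case. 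Hence the second term is $-2\sum_{j=0}^{t-1}\sqrt{(j+1)(n-j)}\,\beta_j\beta_{j+1}$, which is precisely the off-diagonal part \eqref{eq:definition-Ht-off-diagonal} of $\beta^\top H_t\beta$.

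The only delicate points are at the boundary. Every index involved must satisfy $j+1 \le n$ so that $\phi_{j+1}$ is defined and orthonormality applies. For $j \le t-1 \le n-1$ this holds. For $j = t = n$, the convention that the $\phi_{n+1}$ term vanishes is exactly what is needed, since that coefficient is $\sqrt{(n+1)\cdot 0} = 0$. Finally, $\phi_{-1} = 0$ handles $j = 0$. Checking these edge cases is the main obstacle, and it is mild.
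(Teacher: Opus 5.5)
Your proposal is correct and follows essentially the same route as the paper: you split $2r-2\rho$ as $(n-2\rho)-(n-2r)$, use orthonormality for the diagonal term, and use the three-term recurrence plus orthonormality for the off-diagonal term, with the $j=n$ case handled by the terminal identity $(n-2r)\phi_n(r)=\sqrt{n}\,\phi_{n-1}(r)$, which is valid at the points $r=0,\ldots,n$. One small slip: in your symmetry sentence, the entry produced by the first case is the $(j,j-1)$ entry, not the $(j-1,j)$ entry; symmetry is in any case automatic, since $2^{-n}\sum_r\binom{n}{r}(n-2r)\phi_i(r)\phi_j(r)$ is symmetric in $i$ and $j$.
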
    
   
    \begin{proofclaim}
        Let $X \sim \operatorname{Bin}(n,1/2)$. By the definition of the binomial distribution,
        \begin{equation*}
            2^{-n}\sum_{r=0}^n \binom{n}{r}(2r-2\rho)q_\beta(r)^2 = \E\bigl[(2X-2\rho)q_\beta(X)^2\bigr].
        \end{equation*}
        Decomposing the factor $2X-2\rho$ gives
        \begin{equation*}
            \E\bigl[(2X-2\rho)q_\beta(X)^2\bigr] = (n-2\rho)\E\bigl[q_\beta(X)^2\bigr] - \E\bigl[(n-2X)q_\beta(X)^2\bigr].
        \end{equation*}
        Equation~\eqref{eq:krawtchouk-orthogonality} and the definition of $q_\beta$ yield
        \begin{equation*}
            \E\bigl[q_\beta(X)^2\bigr] = \sum_{i,j=0}^t \beta_i\beta_j\E[\phi_i(X)\phi_j(X)] = \sum_{j=0}^t\beta_j^2.
        \end{equation*}
        Multiplying \eqref{eq:krawtchouk-recurrence} by $\phi_i(X)$ and taking expectations gives
        \begin{equation*}
            \E\bigl[\phi_i(X)(n-2X)\phi_j(X)\bigr] = \sqrt{(j+1)(n-j)}\,\delta_{i,j+1} + \sqrt{j(n-j+1)}\,\delta_{i,j-1}.
        \end{equation*}
        When $t = n$, for $j = n$ we use the terminal identity $(n-2X)\phi_n(X)=\sqrt{n}\,\phi_{n-1}(X)$.
        It follows that
        \begin{align*}
            \E\bigl[(n-2X)q_\beta(X)^2\bigr]
            & = \sum_{i,j=0}^t \beta_i\beta_j \E\bigl[\phi_i(X)(n-2X)\phi_j(X)\bigr] \\
            & = 2\sum_{j=0}^{t-1} \sqrt{(j+1)(n-j)}\,\beta_j\beta_{j+1},
        \end{align*}
        where the second equality follows by reindexing one of the two identical sums. Therefore,
        \begin{align*}
            \E\bigl[(2X-2\rho)q_\beta(X)^2\bigr]
            & = (n-2\rho)\sum_{j=0}^t\beta_j^2 - 2\sum_{j=0}^{t-1} \sqrt{(j+1)(n-j)}\,\beta_j\beta_{j+1} \\
            & = \beta^\top H_t\beta.
        \end{align*}
        Combining this with the first equality proves \eqref{eq:H-quadratic-form-rank}.
    \end{proofclaim}

    Finally, fix $z\in\R^t$ and let $\beta\in\R^{t+1}$ be the unique coefficient vector corresponding to $z$ under the bijection in Claim~1. Since $p_z=q_\beta$, equations~\eqref{eq:function} and \eqref{eq:H-quadratic-form-rank} give
    \begin{equation*}
        f_{n,t}(z) = \sum_{r=0}^n \binom{n}{r}(r-\rho)p_z(r)^2 = 2^{n-1}\beta^\top H_t\beta.
    \end{equation*}
    Together with the bijection from Claim~1, this proves \eqref{eq:constrained-H-rank}.
\end{proofwithclaims}

For fixed $n$, the sequence $(f_{n,t}^*)_{t = 0}^{n}$ is nonincreasing in $t$. Indeed, for $t \leq n-1$, appending a zero to any $z \in \R^t$ leaves both $p_z$ and the objective value unchanged at level $t + 1$.

The next lemma relates the sign of $f_{n,t}^*$ to that of $\det(H_t)$. We use
\begin{equation*}
    \operatorname{sgn}(a) :=
    \begin{cases}
        -1, & a < 0, \\
         0, & a = 0, \\
         1, & a > 0.
    \end{cases}
\end{equation*}

\begin{lemma} \label{lem:reduce-to-det-sign}
    For $n \geq 2$ and $1 \leq t \leq n$, we have
    \begin{equation*}
        \operatorname{sgn}(f_{n,t}^*) = \operatorname{sgn}(\det(H_t)).
    \end{equation*}
\end{lemma}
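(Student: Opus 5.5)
By Lemma~\ref{lem:reduce-to-constrained-minimization}, it suffices to show that
\[
m_t := \min\{\beta^\top H_t\beta : \nu^\top\beta = 1\}
\]
has the same sign as $\det(H_t)$. Here the minimum should be read as an infimum, possibly $-\infty$.

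The first structural fact is that the leading principal submatrix $H_{t-1}$ is positive definite. To see this, apply Claim~2 of that lemma with $t-1$ in place of $t$. For $\beta\in\R^t$, the polynomial $q_\beta$ has degree at most $t-1\le n-1$. We then have
\[
\beta^\top H_{t-1}\beta=2^{-n}\sum_r\binom nr(2r-2\rho)q_\beta(r)^2.
\]
Every weight with $r\ge1$ is positive. The only negative weight sits at $r=0$. Now suppose this form is $\le 0$ for some $\beta\neq0$. Then either $q_\beta$ vanishes at all of $1,\dots,n$, which is $n\ge t$ points and forces $q_\beta\equiv0$, or the form is strictly positive. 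So the only remaining danger is a $q_\beta$ vanishing on $\{1,\dots,n\}$, and that is excluded by the degree bound.

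This is not yet enough, because $H_{t-1}$ is the block of indices $0,\dots,t-1$, while the constraint involves $\nu$. The cleaner route is via the variables $z$. I would instead argue on the subspace $W=\{\beta:\nu^\top\beta=0\}$, which corresponds to polynomials of degree $\le t$ with $q(0)=0$. On $W$ the form equals $2^{-n}\sum_{r\ge1}\binom nr(2r-2\rho)q(r)^2$. A nonzero $q$ of degree $\le t\le n$ with $q(0)=0$ cannot vanish on all of $1,\dots,n$, since that would give $n+1$ roots. Hence $H_t$ is positive definite on the hyperplane $W$, which has codimension $1$.

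Two consequences follow. First, $H_t$ has at most one nonpositive eigenvalue, by Cauchy interlacing or Courant--Fischer. Second, the constrained minimum is attained, because the form is strictly convex on the affine slice. It is therefore finite and given by the Lagrange/Schur-complement formula.

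Next I relate the constrained minimum to the determinant using the bordered matrix
\[
B=\begin{pmatrix} H_t & \nu\\ \nu^\top & 0\end{pmatrix}.
\]
Let $P$ be a basis matrix for $W$, so that $P^\top H_tP\succ0$. Completing to coordinates $\beta=\beta_0+Pu$ with $\nu^\top\beta_0=1$, and minimizing over $u$, gives $m_t=\beta_0^\top H_t\beta_0 - c^\top(P^\top H_tP)^{-1}c$, where $c=P^\top H_t\beta_0$. This is a Schur complement. I would choose the basis $(\beta_0,P)$ of $\R^{t+1}$ with $\nu^\top\beta_0=1$. In these coordinates $H_t$ becomes the block matrix
\[
\begin{pmatrix}\beta_0^\top H_t\beta_0 & c^\top\\ c & P^\top H_tP\end{pmatrix}.
\]
Its determinant equals $\det(H_t)\det(T)^2$, where $T=[\beta_0\,|\,P]$ is invertible. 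By the Schur complement identity, the same determinant equals $\det(P^\top H_tP)\cdot m_t$. Since $\det(P^\top H_tP)>0$ and $\det(T)^2>0$, we get $\operatorname{sgn}(m_t)=\operatorname{sgn}\det(H_t)$. Combining this with the factor $2^{n-1}>0$ in Lemma~\ref{lem:reduce-to-constrained-minimization} gives the claim.

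The main obstacle is the positivity of $H_t$ on $W$. This is where one must carefully use $t\le n$ and the fact that only the $r=0$ weight is negative. The case $t=n$ needs the terminal form of Claim~2, which the paper already handles. Everything else is linear algebra.
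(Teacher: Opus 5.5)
Your final argument is correct and is essentially the paper's proof. You first show that $H_t$ is positive definite on $\nu^\perp$ (using $q(0)=0$ and the bound $t\le n$). You then apply a congruence with $T=[\beta_0\,|\,P]$, which gives $\det(T)^2\det(H_t)=\det(P^\top H_tP)\,m_t$. The paper does the same but takes $\beta_0$ to be the constrained minimizer $\beta^*$, so the Lagrange condition $H_t\beta^*=\lambda\nu$ makes the block form diagonal; this is equivalent to your Schur-complement computation.

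You should delete the opening ``structural fact'' that $H_{t-1}\succ 0$, because it is false in general. For example, with $n=3$, $\rho=1/2$, $t=3$ one gets $\det H_2=\Delta_2=-6$. Your justification overlooks that the negative $r=0$ term $-2^{1-n}\rho\,q_\beta(0)^2$ can dominate when $q_\beta(0)\neq 0$. Nothing in your proof depends on this claim, so removing it leaves the argument intact.
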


\begin{proof}
    We first show that $H_t$ is positive definite on $\nu^\perp := \{d \in \R^{t + 1} \mid \nu^\top d = 0\}$. Fix $0 \neq d \in \nu^\perp$. Recall the definition of $q_d(x)$:
    \begin{equation*}
        q_d(x) := \sum_{j=0}^t d_j\phi_j(x).
    \end{equation*}
    Since $\phi_j(0) = \sqrt{\binom{n}{j}} = \nu_j$, we have $q_d(0) = \nu^\top d = 0$. Therefore,
    \begin{equation} \label{eq:H-positive-on-nu-perp}
        \begin{aligned}
            d^{\top} H_t d
            = 2^{-n} \sum_{r=0}^n \binom{n}{r}(2r - 2\rho)q_d(r)^2
            = 2^{-n} \sum_{r=1}^n \binom{n}{r}(2r - 2\rho)q_d(r)^2,
        \end{aligned}
    \end{equation}
    where the first equality is \eqref{eq:H-quadratic-form-rank} and the second equality follows from $q_d(0) = 0$.
    Every coefficient in this sum is positive. If the sum were zero, then $q_d$ would vanish at $0, 1, \ldots, n$. Since $\deg(q_d) \leq t \leq n$, this would imply $q_d \equiv 0$. The linear independence of $\phi_0, \ldots, \phi_t$ would then imply $d = 0$, which is a contradiction. Hence $d^{\top} H_t d > 0$ for every $0 \neq d \in \nu^\perp$.

    Let $U \in \R^{(t + 1) \times t}$ have columns that form a basis of $\nu^\perp$. Then $G := U^{\top} H_t U$ is positive definite by \eqref{eq:H-positive-on-nu-perp}. Take, for example, the feasible point $\bar\beta := \nu/(\nu^{\top}\nu)$ in $\nu^{\top}\beta =1$. Every feasible point in $\nu^{\top}\beta = 1$ can be written uniquely as $\bar\beta + U\xi$ for some $\xi \in \R^t$. Under this parametrization, the objective in \eqref{eq:constrained-H-rank} becomes
    \begin{equation*}
        (\bar\beta + U\xi)^{\top} H_t (\bar\beta + U\xi) = \xi^{\top} G \xi + 2h^{\top} \xi + \bar\beta^{\top} H_t \bar\beta,
    \end{equation*}
    where $h := U^{\top} H_t \bar\beta$. Since $G$ is positive definite, this quadratic function has the unique minimizer $\xi^* = -G^{-1}h$. Therefore, the problem in \eqref{eq:constrained-H-rank} has the unique minimizer $\beta^* := \bar\beta + U\xi^*$.

    The first-order optimality conditions for the Lagrangian $\L(\beta, \lambda) = \beta^{\top} H_t \beta - 2\lambda(\nu^{\top}\beta - 1)$ give
    \begin{equation*}
        H_t\beta^* = \lambda \nu
    \end{equation*}
    for some $\lambda \in \R$. Since $\nu^{\top}\beta^* = 1$, we obtain $(\beta^*)^{\top} H_t \beta^* = \lambda$. Equation~\eqref{eq:constrained-H-rank} shows that $\operatorname{sgn}(f_{n,t}^*) = \operatorname{sgn}(\lambda)$

    Define $P := [\,\beta^*\;U\,] \in \R^{(t + 1) \times (t + 1)}$. The matrix $P$ is invertible because $\beta^* \notin \nu^\perp$ and the columns of $U$ form a basis of $\nu^\perp$. Moreover,
    \begin{equation*}
        P^{\top} H_t P =
        \begin{pmatrix}
            (\beta^*)^{\top} H_t \beta^* & (\beta^*)^{\top} H_t U \\
            U^{\top} H_t \beta^* & U^{\top} H_t U
        \end{pmatrix}
        =
        \begin{pmatrix}
            \lambda & 0 \\
            0 & G
        \end{pmatrix},
    \end{equation*}
    where $U^{\top} H_t \beta^* = \lambda U^{\top} \nu = 0$. Taking determinants yields
    \begin{equation*}
        (\det P)^2 \det(H_t) = \lambda \det(G).
    \end{equation*}
    Both $(\det P)^2$ and $\det(G)$ are positive, which proves the result.
\end{proof}

We next derive a recurrence for $\det(H_t)$ and relate it to a Krawtchouk polynomial.

\begin{lemma} \label{lem:determinant-recurrence}
    Let $\Delta_t := \det(H_t)$ for $0\leq t\leq n$ and set $\Delta_{-1} := 1$. Then $\Delta_0 = n - 2\rho$ and, for $1 \leq t \leq n$,
    \begin{equation}\label{eq:Delta-recurrence}
        \Delta_t = (n - 2\rho)\Delta_{t - 1} - t(n - t + 1)\Delta_{t - 2}.
    \end{equation}
    Moreover, for $0\leq t\leq n-1$,
    \begin{equation}\label{eq:connection-det-kraw}
        \det(H_t) = (t + 1)!K_{t + 1}^{n}(\rho).
    \end{equation}
\end{lemma}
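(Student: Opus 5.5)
The recurrence \eqref{eq:Delta-recurrence} is the standard continuant identity for tridiagonal determinants. I would expand $\det(H_t)$ along its last row (index $t$). The diagonal entry $n-2\rho$ multiplies $\det(H_{t-1})$. The off-diagonal entry $-\sqrt{t(n-t+1)}$, located at position $(t,t-1)$, produces a cofactor whose expansion along its last column leaves $\det(H_{t-2})$ multiplied by the square of that entry. This gives $\Delta_t=(n-2\rho)\Delta_{t-1}-t(n-t+1)\Delta_{t-2}$. For $t=1$ it reads $\Delta_1=(n-2\rho)^2-n$, which matches the direct $2\times2$ computation once we set $\Delta_{-1}=1$. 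The base case $\Delta_0=n-2\rho$ is immediate from \eqref{eq:definition-Ht-diagonal}.

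For \eqref{eq:connection-det-kraw} I would show that $E_t:=(t+1)!\,K^n_{t+1}(\rho)$, for $-1\le t\le n-1$, satisfies the same recurrence with the same initial values; induction then gives $E_t=\Delta_t$. I would use the unnormalized three-term recurrence for Krawtchouk polynomials, $(j+1)K_{j+1}(x)=(n-2x)K_j(x)-(n-j+1)K_{j-1}(x)$. It follows from \eqref{eq:krawtchouk-recurrence} by multiplying through by $\sqrt{\binom{n}{j}}$ and using $\sqrt{\binom nj}\sqrt{(j+1)(n-j)}/\sqrt{\binom n{j+1}}=j+1$ together with $\sqrt{\binom nj}\sqrt{j(n-j+1)}/\sqrt{\binom n{j-1}}=n-j+1$. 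Alternatively, it can be obtained by differentiating the generating function \eqref{eq:krawtchouk-definition-generating} in $u$.

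I expect the main care to be needed here: \eqref{eq:krawtchouk-recurrence} is stated for integer arguments, or as an identity of polynomials, while we evaluate at $x=\rho$. Since both sides are polynomials in $x$ and the identity holds at the $n+1$ points $0,\dots,n$, with degrees at most $n$ for $j\le n-1$, it holds identically. The generating-function derivation avoids this issue altogether, because $(1-u)^x(1+u)^{n-x}$ is a formal power series for any real $x$ and the relation $(1-u^2)F'=((n-2x)-nu)F$ yields the recurrence coefficientwise. Multiplying the recurrence at $j=t$ by $t!$ gives
\begin{equation*}
(t+1)!K_{t+1}(\rho)=(n-2\rho)\,t!\,K_t(\rho)-t(n-t+1)\,(t-1)!\,K_{t-1}(\rho),
\end{equation*}
that is, $E_t=(n-2\rho)E_{t-1}-t(n-t+1)E_{t-2}$.

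For the initial values, $E_{-1}=0!\,K_0(\rho)=1$ and $E_0=1!\,K_1(\rho)=(n-\rho)-\rho=n-2\rho$, both read off from \eqref{eq:krawtchouk-definition}. These match $\Delta_{-1}$ and $\Delta_0$. The range $t\le n-1$ keeps $K_{t+1}$ within the degrees $\le n$ covered by the definition, which explains why \eqref{eq:connection-det-kraw} is asserted only up to $n-1$.
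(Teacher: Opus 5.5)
Your proposal is correct and follows the paper's proof. You expand $\det(H_t)$ along its last row to get the continuant recurrence. You then multiply the unnormalized Krawtchouk three-term recurrence at $x=\rho$ by $t!$, which shows that $(t+1)!K_{t+1}^{n}(\rho)$ satisfies the same recurrence with the same initial values $1$ and $n-2\rho$. Your extra check that the Krawtchouk recurrence holds as a polynomial identity at non-integer $\rho$, via the generating function or interpolation at $0,\dots,n$, is a detail the paper leaves to its citation of MacWilliams--Sloane.
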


\begin{proof}
    Expanding the determinant of the tridiagonal matrix $H_t$ along its last row gives the standard determinant recurrence (\cite{usmani1994inversion}, pp.~413--414)
    \begin{equation*}
        \Delta_t = (H_t)_{tt}\Delta_{t - 1} - (H_t)_{t,t - 1}^2\Delta_{t - 2}.
    \end{equation*}
    Equations~\eqref{eq:definition-Ht-diagonal} and \eqref{eq:definition-Ht-off-diagonal} give $(H_t)_{tt} = n - 2\rho$ and $(H_t)_{t,t - 1}^2 = t(n - t + 1)$, which proves \eqref{eq:Delta-recurrence}.

    The unnormalized Krawtchouk recurrence given in Chapter~5, Section~7, p.~151 of \cite{macwilliams1977theory} is
    \begin{equation*}
        (n - 2x)K_j^{n}(x) = (j + 1)K_{j + 1}^{n}(x) + (n - j + 1)K_{j - 1}^{n}(x).
    \end{equation*}
    For $0\leq t\leq n-1$, setting $x = \rho$ and $j = t$ yields
    \begin{equation*}
        (t + 1)K_{t + 1}^{n}(\rho) = (n - 2\rho)K_t^{n}(\rho) - (n - t + 1)K_{t - 1}^{n}(\rho).
    \end{equation*}
    For $1\leq t\leq n-1$, multiplication by $t!$ gives
    \begin{equation*}
        (t+1)!K_{t+1}^{n}(\rho) = (n-2\rho)t!K_t^{n}(\rho) - t(n-t+1)(t-1)!K_{t-1}^{n}(\rho).
    \end{equation*}
    The corresponding initial values are $K_0^{n}(\rho) = 1 = \Delta_{-1}$ and $K_1^{n}(\rho) = n-2\rho = \Delta_0$. Thus, the quantities $(t+1)!K_{t+1}^{n}(\rho)$ satisfy the same recurrence and initial conditions as $\Delta_t$, so $\Delta_t=(t+1)!K_{t+1}^{n}(\rho)$ for $0 \leq t \leq n-1$. This proves \eqref{eq:connection-det-kraw}.
\end{proof}

\bigskip
We are now ready to prove Theorem~\ref{theorem:exact-lasserre-rank}.

\begin{proof}[Proof of Theorem~\ref{theorem:exact-lasserre-rank}]

    By Theorem~\ref{theorem:least-squares} and Lemmas~\ref{lem:reduce-to-det-sign} and \ref{lem:determinant-recurrence}, for $1 \leq t \leq n$,
    \begin{equation*}
        \operatorname{sgn}(f_{n,t}^*) = \operatorname{sgn}(\det(H_t)) = \operatorname{sgn}(\Delta_t).
    \end{equation*}
    The same conclusion holds for $t = 0$ because $f_{n,0}^* = 2^{n - 1}(n - 2\rho) > 0$ and $\Delta_0 = n - 2\rho > 0$.

    Finally, at $t=n$, choosing $z_j=(-1)^j$ gives
    \begin{equation*}
        p_z(r) = \sum_{j=0}^r(-1)^j\binom{r}{j} = 0, \qquad r = 1,\ldots,n.
    \end{equation*}
    Hence $f_{n,n}^* = -\rho<0$, so the set on the right-hand side of~\eqref{eq:exact-rank-recurrence} is nonempty. Theorem~\ref{theorem:least-squares} now shows that the rank is precisely the first $t$ for which $\Delta_t < 0$. This proves \eqref{eq:exact-rank-recurrence}.
\end{proof}

To bound the bit complexity for fixed rational $\rho = p/q$, with positive integers $p<q$, set $\widehat\Delta_t := q^{t+1}\Delta_t$. The initial values are $\widehat\Delta_{-1} = 1$ and $\widehat\Delta_0 = qn - 2p$, and
\begin{equation*}
    \widehat\Delta_t = (qn-2p)\widehat\Delta_{t-1} - q^2t(n-t+1)\widehat\Delta_{t-2}.
\end{equation*}
This scaling preserves signs and gives an integer recurrence. Let $B_t := \max_{-1\leq j\leq t} |\widehat\Delta_j|$. Since $p$ and $q$ are fixed and $t(n-t+1) \leq n^2$, the recurrence gives $B_t \leq C_{p,q}n^2B_{t-1}$ for a constant $C_{p,q}$, and hence each $\widehat\Delta_t$ has $O(n\log n)$ bits. Using standard integer arithmetic, each recurrence step takes $O(n\log^2 n)$ time, so all $\widehat\Delta_t$ can be computed in $O(n^2\log^2 n)$ time.


\bigskip

If we allow $\rho$ to vary with $n$, the Lasserre rank can be equal to $n$. Let $\rho_n$ denote the largest value of $\rho$ for which $Q_{n,\rho}$ has Lasserre rank $n$. This small-$\rho$ regime was previously studied in \cite{kurpisz2019boolean,au2018elementary}. Using $1/\rho$ as the parameter, \cite[Theorem~12 and Lemma~13]{kurpisz2019boolean} obtained lower and upper bounds on the rank when $1/\rho \geq 2$. In particular, \cite[Corollary~4.2]{kurpisz2019boolean} showed that $Q_{n,2^{-(n+1)}}$ has Lasserre rank $n$. More directly, (\cite{au2018elementary}, Theorem~14) proved that
\begin{equation*}
    \frac{n+1}{2^{n+2}-n-3} \leq \rho_n \leq \frac{n}{2^{n+1}-2}.
\end{equation*}
Thus, $\rho_n = \Theta(n/2^n)$, but these bounds differ asymptotically by a factor of two. The following theorem characterizes $\rho_n$ exactly and shows that the upper bound in \cite{au2018elementary} is asymptotically tight.

\begin{theorem} \label{prop:full-rank-threshold}
    For every $n \geq 2$, there is a unique $\rho_n \in (0,1)$ satisfying
    \begin{equation} \label{eq:full-rank-threshold}
        \rho_n \sum_{r=1}^n\frac{\binom{n}{r}}{r-\rho_n} = 1.
    \end{equation}
    Moreover, $\operatorname{rank}_{\LAS}(Q_{n,\rho})=n$ if and only if $0 < \rho \leq \rho_n$, and $\rho_n \sim n/2^{n+1}$.
\end{theorem}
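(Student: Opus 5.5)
Rank $n$ means $f^*_{n,n-1}\ge 0$, i.e.\ the relaxation at level $n-1$ is nonempty. By Theorem~\ref{theorem:least-squares} and monotonicity of $f^*_{n,t}$ in $t$, we have $\operatorname{rank}_{\LAS}(Q_{n,\rho})=n$ iff $f^*_{n,n-1}\ge 0$. I will compute $f^*_{n,n-1}$ directly instead of through the Krawtchouk recurrence.

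At level $t=n-1$, the polynomials $p_z$ range over all polynomials of degree at most $n-1$ with $p(0)=1$. Their values $(p(1),\dots,p(n))$ are constrained by exactly one linear relation, namely that the $n$-th finite difference of $p$ on $0,\dots,n$ vanishes:
\[
\sum_{r=0}^n(-1)^r\binom{n}{r}p(r)=0 .
\]
Writing $u_r:=p(r)$ for $r\ge 1$, this becomes $\sum_{r=1}^n(-1)^{r}\binom{n}{r}u_r=-1$, with the $u_r$ otherwise free. Hence
\[
f^*_{n,n-1}=-\rho+\min\Bigl\{\sum_{r=1}^n w_ru_r^2 \;:\; \sum_{r=1}^n c_ru_r=-1\Bigr\},
\]
where $w_r=\binom{n}{r}(r-\rho)$ and $c_r=(-1)^r\binom nr$. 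By Cauchy--Schwarz or Lagrange multipliers, the minimum equals
\[
\Bigl(\sum_r c_r^2/w_r\Bigr)^{-1}=\Bigl(\sum_{r=1}^n \tfrac{\binom nr}{r-\rho}\Bigr)^{-1}.
\]
Therefore $f^*_{n,n-1}\ge 0$ iff $F(\rho):=\rho\sum_{r=1}^n\binom nr/(r-\rho)\le 1$.

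Next I handle existence, uniqueness and the threshold. On $(0,1)$ the function $F$ is continuous and strictly increasing, since each term $\rho/(r-\rho)$ is increasing and positive. Moreover $F(0^+)=0$, and $F(\rho)\to\infty$ as $\rho\to1^-$ because of the $r=1$ term $n\rho/(1-\rho)$. So there is a unique root $\rho_n\in(0,1)$ of $F=1$, and $F(\rho)\le1$ iff $\rho\le\rho_n$. This gives the equivalence: rank $n$ iff $0<\rho\le\rho_n$, with the boundary case $F=1$ giving $f^*=0$, which still means a nonempty relaxation.

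For the asymptotics I will bound $\rho_n$ from both sides. Since $\rho_n\to 0$ (for instance $F(\rho)\ge \rho\sum_r\binom nr/r$, which forces $\rho_n\le (\sum_r\binom nr/r)^{-1}$), write
\[
1=\rho_n\,S_n(1+o(1)), \qquad S_n:=\sum_{r=1}^n\frac{\binom nr}{r},
\]
where the $1+o(1)$ holds because $1/(r-\rho)=\tfrac1r(1+O(\rho))$ uniformly in $r\ge1$. It remains to show $S_n\sim 2^{n+1}/n$. Split the sum at $|r-n/2|\le n^{2/3}$. In the central range $1/r=(2/n)(1+o(1))$, and the binomial mass there is $2^n(1+o(1))$ by a Chernoff bound. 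The tails contribute at most $2^n e^{-cn^{1/3}}$, which is $o(2^n/n)$. Alternatively, I could use $\mathbb{E}[1/(X)\mathbf 1_{X\ge1}]$ for $X\sim\operatorname{Bin}(n,1/2)$ together with concentration. Hence $\rho_n\sim n/2^{n+1}$.

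The main obstacle is the first step. One must justify that $\{(p(1),\dots,p(n)) : \deg p\le n-1,\ p(0)=1\}$ is exactly the affine hyperplane described above, which follows by a dimension count plus the vanishing of the $n$-th difference for polynomials of degree less than $n$. One must also correctly handle the equality case $F(\rho_n)=1$, where $\Delta_{n-1}=0$ is not negative, so the rank is still $n$. The asymptotic step is routine.
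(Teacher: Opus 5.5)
Your proposal is correct and follows essentially the paper's route: your finite-difference constraint $\sum_{r=1}^n(-1)^r\binom{n}{r}u_r=-1$ is the paper's observation that $\ker(A_{n-1}^\top)$ is spanned by $\eta_r=(-1)^r\sqrt{\binom{n}{r}/(r-\rho)}$, written in value coordinates, and your Cauchy--Schwarz step gives $f^*_{n,n-1}=1/T_n(\rho)-\rho$ just as the paper's normal-equations argument does. Your existence, uniqueness and equality-case treatment of $\rho_n$, and your concentration estimate $\sum_r\binom{n}{r}/r\sim 2^{n+1}/n$, also match the paper, which uses a window of width $n^{3/4}$ where you use $n^{2/3}$.
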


\begin{proof}
    For $0\leq\rho<1$, let $T_n(\rho) := \sum_{r=1}^n \binom{n}{r} / (r-\rho)$. By Theorem~\ref{theorem:least-squares} and the monotonicity of $f_{n,t}^*$, the rank is $n$ if and only if $f_{n,n-1}^* \geq 0$. Consider the matrix $A_{n-1} \in \R^{n\times(n-1)}$ and vector $b \in \R^n$ defined in~\eqref{eq:Arj}, so that $f_{n,n-1}(z) = \lVert A_{n-1}z-b \rVert_2^2 - \rho$. The first $n-1$ rows of $A_{n-1}$ form an invertible lower triangular matrix, and hence $A_{n-1}$ has rank $n-1$. Define $\eta \in \R^n$ by $\eta_r := (-1)^r\sqrt{\binom{n}{r}/(r-\rho)}$.
    For every $j=1,\ldots,n-1$, the binomial theorem gives
    \begin{equation*}
        (A_{n-1}^\top\eta)_j
        = \sum_{r=j}^n(-1)^r\binom{n}{r}\binom{r}{j}
        = (-1)^j\binom{n}{j}
          \sum_{s=0}^{n-j}(-1)^s\binom{n-j}{s}
        = 0.
    \end{equation*}
    Thus $\ker(A_{n-1}^\top) = \operatorname{span}\{\eta\}$. Moreover, $\lVert \eta \rVert_2^2 = T_n(\rho)$ and $\eta^\top b = -\sum_{r=1}^n(-1)^r\binom{n}{r} = 1$. If $z^*$ minimizes $f_{n,n-1}$, the normal equations imply that $A_{n-1}z^*-b = \lambda \eta$ for some $\lambda \in \R$. Taking the inner product with $\eta$ yields $\lambda T_n(\rho) = -1$, and therefore
    \begin{equation*}
        f_{n,n-1}^* = \lambda^2 \lVert \eta \rVert_2^2 - \rho = \frac{1}{T_n(\rho)} - \rho.
    \end{equation*}
    Thus the Lasserre rank is $n$ if and only if $\rho T_n(\rho) \leq 1$. Since $\rho T_n(\rho)$ is continuous and strictly increasing from zero to infinity on $(0,1)$, this proves the existence and uniqueness of $\rho_n$ and the claimed characterization of $\operatorname{rank}_{\LAS}(Q_{n,\rho}) = n$.

    It remains to estimate $\rho_n$. Since $r(1-\rho_n) \leq r-\rho_n \leq r$ for every $r \geq 1$, we have $T_n(0) \leq T_n(\rho_n) \leq T_n(0)/(1-\rho_n)$. Together with $\rho_nT_n(\rho_n)=1$, this gives
    \begin{equation*}
        1-\rho_n \leq \rho_n T_n(0) \leq 1.
    \end{equation*}
    Also, $T_n(0) \geq n$ implies $\rho_n \leq T_n(0)^{-1} \leq 1/n$, so $\rho_n \to 0$. Hence $\rho_n T_n(0) \to 1$, or equivalently, $\rho_n \sim T_n(0)^{-1}$. To estimate $T_n(0)^{-1}$, let $X_n \sim \operatorname{Bin}(n,1/2)$. Then
    \begin{equation*}
        T_n(0) = 2^n\E\!\left[ X_n^{-1}\mathbf{1}_{\{X_n\geq1\}} \right],
    \end{equation*}
    where the expression inside the expectation is defined to be zero when $X_n=0$. Set $E_n:=\{|X_n-n/2|\leq n^{3/4}\}$ and let $E_n^c$ denote the complement set. 
    A Chernoff bound gives $\Pr(E_n^c) \leq 2e^{-2\sqrt n}$. On $E_n$, we have $X_n^{-1} = (2/n)(1+o(1))$ uniformly, and hence $\E[X_n^{-1}\mathbf{1}_{E_n}] = (2/n)(1+o(1))$. On the other hand, $X_n^{-1} \leq 1$ whenever $X_n \geq 1$, so
    \begin{equation*}
        0 \leq \E\!\left[ X_n^{-1}\mathbf{1}_{\{X_n\geq1\}\cap E_n^c} \right] \leq\Pr(E_n^c) \leq 2e^{-2\sqrt n} = o(n^{-1}).
    \end{equation*}
    Consequently, $T_n(0) \sim 2^{n+1}/n$, and thus $\rho_n \sim T_n(0)^{-1} \sim n/2^{n+1}$.
\end{proof}


\section{Asymptotically Tight Lasserre Rank} \label{sec:asymptotic-analysis}

We next turn to the asymptotic analysis for fixed $\rho$. The cropping constant is fixed independently of $n$ in the following theorem.

\begin{theorem} \label{thm:asymptotic}
    For every fixed $0<\rho<1$,
    \begin{equation}\label{eq:rank-asymptotic}
        \operatorname{rank}_{\LAS}(Q_{n,\rho}) = \frac{n}{2} + c_\rho\sqrt{n} + o(\sqrt{n}),
    \end{equation}
    where $c_\rho > 0$ is the unique zero of the function
    \begin{equation}\label{eq:kappa-definition-rank}
        \kappa_\rho(c):=\int_0^\infty s^\rho e^{-s^2/8} \cos \left(cs + \frac{\pi\rho}{2}\right) \, ds.
    \end{equation}
    In particular, $c_{1/2} \approx 0.3825$.
\end{theorem}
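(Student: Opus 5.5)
By Theorem~\ref{theorem:exact-lasserre-rank} and Lemma~\ref{lem:determinant-recurrence}, for $t \leq n-1$ we have $\operatorname{sgn}(\Delta_t) = \operatorname{sgn}(K_{t+1}^n(\rho))$. The rank is therefore the first $t$ for which $K_{t+1}^n(\rho) < 0$, i.e.\ one less than the first index $j$ with $K_j^n(\rho)<0$. The plan is to obtain a uniform asymptotic for $K_j^n(\rho)$ with $j = n/2 + c\sqrt n$ and $c$ ranging over a compact interval. We then show that its sign is governed by $\kappa_\rho(c)$, and that $\kappa_\rho$ has a unique zero $c_\rho>0$ at which it changes sign from positive to negative.

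\textbf{Step 1: contour integral.} Starting from the generating function~\eqref{eq:krawtchouk-definition-generating}, which is valid for nonintegral $x=\rho$ as a power series, write
\[
K_j^n(\rho) = \frac{1}{2\pi i}\oint (1-u)^\rho (1+u)^{n-\rho} u^{-j-1}\,du .
\]
Substituting $u = e^{i\theta}$ gives $(1+u)^{n}u^{-j} = (2\cos(\theta/2))^n e^{i\theta(n/2-j)}$ and $(1-u)^\rho(1+u)^{-\rho} = (-i\tan(\theta/2))^\rho$, up to branch care near $\theta=0$ and $\theta=\pm\pi$. Since $(2\cos(\theta/2))^n$ is sharply peaked at $\theta=0$ with width $n^{-1/2}$, we rescale $\theta = 2s/\sqrt n$, so that $\cos(\theta/2)^n \approx e^{-s^2/2}$ and $\tan(\theta/2)\approx s/\sqrt n$. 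With $j = n/2 + c\sqrt n$ the phase is $e^{-2ics}$. Combining the contributions of $\pm s$ produces $\cos(2cs + \pi\rho/2)$ from the factor $(\mp i)^\rho = e^{\mp i\pi\rho/2}$. After the further change $s\mapsto s/2$, this should yield
\[
K_j^n(\rho) = \frac{2^{n} C}{\pi\, n^{(\rho+1)/2}}\bigl(\kappa_\rho(c) + o(1)\bigr)
\]
uniformly for $c$ in compacts, for an explicit constant $C>0$. The integrand has a singularity $|\theta|^\rho$ at $\theta=0$, which is integrable but lies on the contour. To handle it cleanly, I would first deform to a circle $|u|=R<1$ avoiding $u=1$; alternatively, treat the $\theta$-integral directly, since $(1-e^{i\theta})^\rho$ is continuous. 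The region $|\theta| > n^{-1/2+\epsilon}$ is exponentially negligible relative to $2^n n^{-(\rho+1)/2}$, and near $\theta=\pm\pi$ the factor $(1+u)^{n-\rho}$ is tiny.

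\textbf{Step 2: the function $\kappa_\rho$.} Write $\kappa_\rho(c) = \operatorname{Re}\bigl(e^{i\pi\rho/2}\int_0^\infty s^\rho e^{-s^2/8} e^{ics}\,ds\bigr)$. This is a parabolic-cylinder or confluent-hypergeometric function, namely a Fourier transform of $|s|^\rho e^{-s^2/8}$ with the Hilbert-type phase, and it is essentially the continuum limit (Hermite function of index $-\rho-1$) of the Krawtchouk scaling. We need: $\kappa_\rho(0) = \cos(\pi\rho/2)\int s^\rho e^{-s^2/8}\,ds > 0$; $\kappa_\rho(c)>0$ for $c\le 0$; and exactly one simple zero on $(0,\infty)$. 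I would identify $\kappa_\rho(c)$ with a multiple of $e^{-2c^2}D_{-\rho-1}(\text{const}\cdot c)$ or with ${}_1F_1$, and then use the known zero structure of $D_\nu$ for $-1<\nu<0$, which has exactly one real zero. Alternatively, one can argue directly through the ODE that $\kappa_\rho$ satisfies, a second-order Weber-type equation, together with a Sturm argument.

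\textbf{Step 3: transfer.} For $j \le n/2 - M\sqrt n$ we still need $K_j^n(\rho)>0$. This follows from a cruder bound, or from Step~1 extended to $c\in[-M,0]$ plus a separate large-deviation argument for $j\le n/2 - M\sqrt n$. In that range the saddle point is real and positive, so the integral is dominated by a positive term. Given that, the first sign change of $K_j^n(\rho)$ in $j$ occurs at $n/2 + (c_\rho+o(1))\sqrt n$ by uniform convergence and the simplicity of the zero, and the shift by one index is absorbed into $o(\sqrt n)$. The main obstacle is Step~3 together with the uniformity in Step~1, namely excluding spurious negative values of $K_j^n(\rho)$ for all $j$ below $n/2 + (c_\rho - \epsilon)\sqrt n$, including the far left tail. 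Numerically, evaluating $\kappa_{1/2}$ then gives $c_{1/2}\approx0.3825$.
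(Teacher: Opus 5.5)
Your architecture matches the paper's: the unit-circle integral for $K_j^n(\rho)$, the rescaling $\theta\sim s/\sqrt n$, dominated convergence to $\kappa_\rho(c)$, a parabolic-cylinder zero count, then sign transfer. Step~1 is fine (your constant is $C=2^{-\rho}$). The gap is in Step~3, which you call the main obstacle and only sketch. For $j\le n/2-M\sqrt n$, the claim that ``the integral is dominated by a positive term'' is a heuristic, not a proof: the saddle $u^*\approx j/(n-j)$ lies only $O(M)$ Gaussian widths from the branch point $u=1$ of $(1-u)^\rho$, so it needs quantitative control uniform in $j$. You would also need uniform convergence on $[-M,c_\rho-\varepsilon]$. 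The missing idea is that none of this is needed. The paper already observes that $f^*_{n,t}$ is nonincreasing in $t$ (append a zero coordinate to $z$). Also $\operatorname{sgn} f^*_{n,t}=\operatorname{sgn}\Delta_t=\operatorname{sgn}K^n_{t+1}(\rho)$ for $0\le t\le n-1$. Hence the sign of $K^n_{t+1}(\rho)$ is nonincreasing in $t$, and spurious negative values below the transition are impossible. It therefore suffices to take $m_n^-=\lfloor n/2+(c_\rho-\varepsilon)\sqrt n\rfloor$ and $m_n^+=\lceil n/2+(c_\rho+\varepsilon)\sqrt n\rceil$. Applying the pointwise limit along just these two sequences gives $K^n_{m_n^-}(\rho)>0>K^n_{m_n^+}(\rho)$ for large $n$, and hence $m_n^-\le\operatorname{rank}_{\LAS}(Q_{n,\rho})\le m_n^+-1$. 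This requires no uniformity in $c$, no tail estimate, no positivity of $\kappa_\rho$ on $(-\infty,0]$, and no simplicity of the zero.

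Step~2, as stated, would also fail. No real multiple of $D_{-\rho-1}$ at a real argument can equal $\kappa_\rho$. For $\nu<0$ one has $D_\nu(x)=\Gamma(-\nu)^{-1}e^{-x^2/4}\int_0^\infty t^{-\nu-1}e^{-t^2/2-xt}\,dt>0$, so $D_\nu$ with $-1<\nu<0$ has \emph{no} real zeros, contrary to your claim, whereas $\kappa_\rho$ must change sign. The index $-\rho-1$ comes from the complex transform $\int_0^\infty t^\rho e^{-t^2/2}e^{ict}\,dt$ at an imaginary argument; taking the real part against $e^{i\pi\rho/2}$ recombines these terms into $D_\rho$. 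Directly, the cosine representation $D_\nu(z)=\sqrt{2/\pi}\,e^{z^2/4}\int_0^\infty t^\nu e^{-t^2/2}\cos(zt-\nu\pi/2)\,dt$ for $\operatorname{Re}\nu>-1$, applied with $\nu=\rho$, $z=-2c$ and $s=2t$, gives $\kappa_\rho(c)=2^\rho\sqrt{2\pi}\,e^{-c^2}D_\rho(-2c)$. One then uses three facts: $D_\rho$ has exactly one real zero $x^*_\rho$ for $0<\rho<1$, $D_\rho(0)>0$, and $D_\rho(-y)<0$ for large $y$. These give $x^*_\rho<0$, $c_\rho=-x^*_\rho/2>0$, and $\operatorname{sgn}\kappa_\rho(c)=\operatorname{sgn}(c_\rho-c)$. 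Your fallback (Weber ODE plus a Sturm argument) is not carried out. So as written, the existence, uniqueness and positivity of $c_\rho$ are not established.
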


Lemmas~\ref{lem:reduce-to-det-sign} and \ref{lem:determinant-recurrence} reduce the rank computation to the sign of $K_{t + 1}^{n}(\rho)$ for $0\leq t\leq n-1$. The asymptotic behavior of Krawtchouk polynomials has been studied in several parameter ranges (\cite{ismail1998asymptotics,li2000uniform,qiu2004expansion,dai2007global,dominici2008krawtchouk}). 
The proof of the following lemma is fairly standard but we could not find a good reference for it, so we include it for completeness.

\begin{lemma} \label{lem:integration-form}
    For $n \geq 2$ and $0 \leq m \leq n$,
    \begin{equation}\label{eq:kraw-integral-rank}
        K_m^{n}(\rho) = \frac{2^{n + 1}}{\pi} \int_0^{\pi/2} (\cos \theta)^{n - \rho}(\sin \theta)^\rho\,\cos\!\left((n - 2m)\theta - \frac{\pi\rho}{2}\right) \, d\theta.
    \end{equation}
\end{lemma}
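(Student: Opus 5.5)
We plan to start from the generating function \eqref{eq:krawtchouk-definition-generating}, $K_m^{n}(\rho) = [u^m]\,(1-u)^\rho(1+u)^{n-\rho}$, and extract the coefficient with a Cauchy integral over the unit circle. The first step is to justify this generating function at the nonintegral point $x=\rho$. Take principal branches of $(1-u)^\rho$ and $(1+u)^{n-\rho}$, which are analytic on the open unit disk. Multiplying the binomial series $(1-u)^\rho=\sum_i(-1)^i\binom{\rho}{i}u^i$ and $(1+u)^{n-\rho}=\sum_k\binom{n-\rho}{k}u^k$ shows that the $u^m$ coefficient is exactly the sum in \eqref{eq:krawtchouk-definition}. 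Alternatively, both sides are polynomials in $x$ that agree at the integers, so they agree at $x=\rho$.

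Next, for $0<r<1$ we write $K_m^{n}(\rho)=\frac{1}{2\pi}\int_0^{2\pi}F(re^{i\varphi})r^{-m}e^{-im\varphi}\,d\varphi$, where $F(u)=(1-u)^\rho(1+u)^{n-\rho}$. We then let $r\to1$. Since $\rho>0$ and $n-\rho>0$, $F$ extends continuously to the closed disk (with value $0$ at $u=\pm1$) and is bounded there, so dominated convergence gives the same formula with $r=1$.

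On the circle we set $\varphi=2\theta$ with $\theta\in(0,\pi)$ and use the factorizations $1-e^{2i\theta}=2\sin\theta\,e^{i(\theta-\pi/2)}$ and $1+e^{2i\theta}=2\cos\theta\,e^{i\theta}$. The arguments $\theta-\pi/2$ and $\theta$ lie in the principal range when $\theta\in(0,\pi/2)$. There the integrand becomes $2^n(\cos\theta)^{n-\rho}(\sin\theta)^\rho e^{i((n-2m)\theta-\pi\rho/2)}$, and since $d\varphi=2\,d\theta$ this half contributes $\frac{2^n}{\pi}\int_0^{\pi/2}(\cdots)$.

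The step needing care is the half $\theta\in(\pi/2,\pi)$, where $\cos\theta<0$. There the principal argument of $1+u$ is $\theta-\pi$, and $|1+u|=2|\cos\theta|$. After substituting $\theta\mapsto\pi-\theta$, we will check that this half of the integrand is exactly the complex conjugate of the first half. One can also argue more simply: $F(\bar u)=\overline{F(u)}$, and the substitution $\varphi\mapsto 2\pi-\varphi$ maps the upper semicircle to the lower one. Either way, the total integral equals twice the real part of the first half, which yields \eqref{eq:kraw-integral-rank} with prefactor $2^{n+1}/\pi$ and the cosine factor. The main obstacle is keeping the branch bookkeeping consistent across $\theta=\pi/2$ (that is, $u=-1$), together with the boundary passage. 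I would resolve the former using the conjugation symmetry rather than by tracking explicit arguments.
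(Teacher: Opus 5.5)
Your proposal is correct and follows essentially the same route as the paper: a Cauchy coefficient integral on $|u|=R<1$, dominated convergence to pass to the unit circle, the half-angle factorizations of $1\pm e^{i\varphi}$ on the upper semicircle, and conjugate symmetry to get twice the real part. The only real difference is that you derive the contour integral yourself from the generating function, justified at $x=\rho$ by multiplying binomial series, rather than citing Johansson's integral representation, which makes your version slightly more self-contained.
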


\begin{proof}
    The integral representation of the Krawtchouk polynomials \cite[p.~232, Eq.~(2.7) with  $p=1/2$ and $z=-2u$]{johansson2002nonintersecting}  is
    \begin{equation*}
        K_m^{n}(x) = \frac{1}{2\pi i}\oint_{|u|=R} (1+u)^{n-x}(1-u)^x\,\frac{du}{u^{m+1}}, \qquad 0<R<1.
    \end{equation*}
    Set $x=\rho$ and parametrize by $u=Re^{i\varphi}$. The factors in the integrand extend continuously to the boundary of the unit disk, and the parametrized integrand is uniformly bounded for $R\geq1/2$. Thus, by dominated convergence, letting $R\uparrow1$ gives
    \begin{equation*}
        K_m^{n}(\rho) = \frac{1}{2\pi}\int_{-\pi}^{\pi} (1+e^{i\varphi})^{n-\rho}(1-e^{i\varphi})^\rho\, e^{-im\varphi}\,d\varphi.
    \end{equation*}
    For $0 < \varphi < \pi$, we have $1+e^{i\varphi} = 2\cos(\varphi/2)e^{i\varphi/2}$ and $1-e^{i\varphi} = 2\sin(\varphi/2)e^{i(\varphi/2-\pi/2)}$. Pairing the conjugate integrands at $\varphi$ and $-\varphi$ yields twice the real part, hence
    \begin{equation*}
        K_m^{n}(\rho) = \frac{2^n}{\pi}\int_0^\pi \cos^{n-\rho}(\varphi/2)\sin^\rho(\varphi/2)\, \cos\left(\frac{n-2m}{2}\varphi-\frac{\pi\rho}{2}\right) \,d\varphi.
    \end{equation*}
    Substituting $\varphi=2\theta$ proves \eqref{eq:kraw-integral-rank}.
\end{proof}

Next we focus on the case when $m = \frac{n}{2} + c_n \sqrt{n}$ where $c_n \in \R$.

\begin{lemma} \label{lem:critical-window-rank}
    Let $m = m(n) \in \{0, \ldots, n\}$ for every integer $n \geq 2$, and write
    \begin{equation*}
        m = \frac{n}{2} + c_n\sqrt{n}, \qquad c_n \in \R.
\end{equation*}
    If $c_n \to c \in \R$, then
    \begin{equation}\label{eq:critical-window-limit-rank}
       \pi 2^{-n +\rho}n^{(\rho+1)/2}K_m^{n}(\rho) \longrightarrow \kappa_\rho(c),
    \end{equation}
    where $\kappa_\rho$ is the function \eqref{eq:kappa-definition-rank}. Moreover, $\kappa_\rho$ has a unique zero $c_\rho$, and
    \begin{equation*}
        \operatorname{sgn}(\kappa_\rho(c)) = \operatorname{sgn}(c_\rho - c).
    \end{equation*}
\end{lemma}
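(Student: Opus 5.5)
The convergence \eqref{eq:critical-window-limit-rank} comes from a Laplace-type rescaling of the integral in Lemma~\ref{lem:integration-form}. The uniqueness and sign statement comes from identifying $\kappa_\rho$ with a classical special function.

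\textbf{Step 1: rescaling.} In \eqref{eq:kraw-integral-rank} substitute $\theta = s/(2\sqrt n)$, with $s \in [0,\pi\sqrt n]$. The factors become:
\begin{itemize}
\item $(\cos\theta)^{n-\rho} \to e^{-s^2/8}$ pointwise, since $\log\cos\theta = -\theta^2/2 + O(\theta^4)$;
\item $(\sin\theta)^\rho = (2\sqrt n)^{-\rho} s^\rho (1+o(1))$ pointwise;
\item $(n-2m)\theta = -c_n s$, so the cosine factor equals $\cos(c_n s + \pi\rho/2) \to \cos(cs+\pi\rho/2)$;
\item $d\theta = ds/(2\sqrt n)$.
\end{itemize}
The prefactor is $\frac{2^{n+1}}{\pi}(2\sqrt n)^{-\rho-1} = \frac{2^{n-\rho}}{\pi} n^{-(\rho+1)/2}$, which is exactly the normalization in \eqref{eq:critical-window-limit-rank}. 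For domination, use $\cos\theta \le e^{-\theta^2/2}$ on $[0,\pi/2]$ and $\sin\theta\le\theta$. Then for $n\ge 2$ the rescaled integrand is bounded by $s^\rho e^{-(n-\rho)s^2/(8n)} \le s^\rho e^{-s^2/16}$, which is integrable. Dominated convergence then gives the limit $\kappa_\rho(c)$. This step is routine.

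\textbf{Step 2: a closed form for $\kappa_\rho$.} Since $s^\rho e^{i\pi\rho/2} = (is)^\rho$ with the principal branch, and conjugation handles negative $s$,
\[
\kappa_\rho(c) = \operatorname{Re}\int_0^\infty (is)^\rho e^{ics} e^{-s^2/8}\,ds = \tfrac12\int_{-\infty}^\infty (is)^\rho e^{ics}e^{-s^2/8}\,ds .
\]
Up to a positive constant, this is a Weyl fractional derivative of order $\rho$ of the Gaussian density $\varphi$ of $N(0,4)$ evaluated at $\pm c$. Equivalently, $\kappa_\rho$ is the derivative of a one-sided Riemann--Liouville integral of order $1-\rho$ of $\varphi$.

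I would then use the standard integral representation of parabolic cylinder functions,
\[
D_\nu(z) = \frac{e^{-z^2/4}}{\Gamma(-\nu)}\int_0^\infty s^{-\nu-1}e^{-zs-s^2/2}\,ds \qquad (\nu<0),
\]
continued to $\nu=\rho\in(0,1)$ by one integration by parts. After the change of variables $s\mapsto s/2$, this should give
\[
\kappa_\rho(c) = C_\rho\, e^{-c^2}\, D_\rho(2c)
\]
for some constant $C_\rho>0$, possibly up to a reflection $c\mapsto -c$. To fix the sign and the orientation, I would check directly that $\kappa_\rho(c)>0$ for $c$ very negative and $\kappa_\rho(c)<0$ for $c$ large positive. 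The quickest route is an asymptotic evaluation: integrating by parts at $s=0$, the $s^\rho$ singularity produces a term of order $-\Gamma(1+\rho)\sin(\pi\rho)\, c^{-\rho-1}$ for $c\to+\infty$, while for $c\to-\infty$ that leading term cancels.

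\textbf{Step 3: uniqueness of the zero and the sign claim (main obstacle).} Here the plan is to invoke the classical fact that for real $\nu$ with $0<\nu\le 1$, $D_\nu$ has exactly one real zero, and all its zeros are simple. This follows from the Sturm comparison theory for $w''=(z^2/4-\nu-1/2)w$. Combined with the endpoint signs from Step 2, this gives a unique zero $c_\rho$ with $\operatorname{sgn}\kappa_\rho(c)=\operatorname{sgn}(c_\rho-c)$.

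If pinning down the exact special-function identity proves messy, I would argue the zero count directly instead. The function $\kappa_\rho$ solves a second-order linear ODE obtained by differentiating under the integral sign. I would show by a Sturm-type comparison that a solution which decays as $c\to-\infty$ can cross zero at most once, using that its turning point structure matches $D_\rho$. The positivity $c_\rho>0$ asserted in Theorem~\ref{thm:asymptotic} would follow by checking $\kappa_\rho(0)>0$. Indeed,
\[
\kappa_\rho(0) = \cos(\pi\rho/2)\int_0^\infty s^\rho e^{-s^2/8}\,ds > 0 .
\]
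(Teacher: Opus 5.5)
Your proposal is correct and follows the paper's proof essentially step for step. It uses the same substitution $\theta=s/(2\sqrt n)$ with dominating function $s^\rho e^{-s^2/16}$, the same identification of $\kappa_\rho$ with a positive multiple of $e^{-c^2}D_\rho(\pm 2c)$, and the same classical fact that $D_\rho$ has exactly one real zero for $0<\rho<1$. The paper reads off $\kappa_\rho(c)=2^\rho\sqrt{2\pi}\,e^{-c^2}D_\rho(-2c)$ directly from the cosine integral representation, so the correct orientation is the reflected one.

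One point to tidy: your asymptotic expansion at $s=0$ cannot give the sign as $c\to-\infty$, since every power-law term cancels there. You do not need it, though. The facts $\kappa_\rho(0)>0$ and $\kappa_\rho(c)<0$ for large $c$, together with the single zero, already give $\operatorname{sgn}\kappa_\rho(c)=\operatorname{sgn}(c_\rho-c)$ with $c_\rho>0$. This is exactly how the paper closes the argument, via $D_\rho(0)>0$ and $D_\rho(-y)<0$ for large $y$.
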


\begin{proof}
    Since $n-2m=-2c_n\sqrt{n}$, substituting $\theta=s/(2\sqrt{n})$ in~\eqref{eq:kraw-integral-rank} gives
    \begin{equation*}
        \pi 2^{-n+\rho}n^{(\rho+1)/2}K_m^{n}(\rho) = \int_0^{\pi\sqrt{n}} s^\rho \left(\cos\frac{s}{2\sqrt{n}}\right)^{n-\rho} \left( \frac{\sin(s/(2\sqrt{n}))}{s/(2\sqrt{n})} \right)^\rho \cos\left(c_ns+\frac{\pi\rho}{2}\right),ds.
    \end{equation*}
    We interpret the sine quotient as one at $s=0$ and extend the integrand by zero outside $[0,\pi\sqrt{n}]$. For every fixed $s\geq0$, the integrand converges to
    \begin{equation*}
        s^\rho e^{-s^2/8} \cos\left(cs+\frac{\pi\rho}{2}\right).
    \end{equation*}
    On $0 \leq x < \tfrac{\pi}{2}$, the inequalities $0 \leq \frac{\sin x}{x} \leq 1$ and $\log(\cos x) \leq -x^2/2$ imply
    \begin{equation*}
        \left(\cos\frac{s}{2\sqrt{n}}\right)^{n-\rho} \leq \exp\left(-\frac{n-\rho}{8n}s^2\right) \leq e^{-s^2/16}
    \end{equation*}
    for $n\geq2$. Thus, the absolute value of the extended integrand is bounded by the integrable function $s^\rho e^{-s^2/16}$. Therefore, \eqref{eq:critical-window-limit-rank} follows from the dominated convergence theorem.
    
    The cosine integral representation of the parabolic-cylinder function $D_\nu$ in \cite[Section~8.3, Eq.~(4), p.~120]{erdelyi1953higher}, applied with $\nu = \rho$, $z = -2c$, and the change of variables $s=2r$, gives
    \begin{equation*}
        \kappa_\rho(c) = 2^\rho\sqrt{2\pi}\,e^{-c^2}D_\rho(-2c).
    \end{equation*}
    For $0<\rho<1$, the function $D_\rho$ has exactly one real zero \cite[Section~8.6, p.~126]{erdelyi1953higher}; denote it by $x_\rho^*$. The same integral representation at $z=0$ shows that $D_\rho(0)>0$. 
    By \cite[Section~8.4, Eq.~(2), p.~123]{erdelyi1953higher},
    $D_\rho(-y)<0$ for all sufficiently large $y$. Therefore by continuity we get $x_\rho^*<0$ and
    \begin{equation*}
        D_\rho(x) < 0 \quad\text{for }x < x_\rho^*,
        \qquad
        D_\rho(x) > 0 \quad\text{for }x > x_\rho^*.
    \end{equation*}
    Set $c_\rho:=-x_\rho^*/2>0$. The factor $2^\rho\sqrt{2\pi}\,e^{-c^2}$ is positive, which gives the desired sign relation $\operatorname{sgn}(\kappa_\rho(c))=\operatorname{sgn}(c_\rho-c)$.
\end{proof}

We now prove Theorem~\ref{thm:asymptotic}.

\begin{proof}[Proof of Theorem~\ref{thm:asymptotic}]
    It remains to prove \eqref{eq:rank-asymptotic}. Write $r_n := \operatorname{rank}_{\LAS}(Q_{n,\rho})$. Fix $\varepsilon > 0$ and, for all sufficiently large $n$, set
    \begin{equation*}
        m_n^- := \left\lfloor \frac{n}{2} + (c_\rho - \varepsilon)\sqrt{n}\right\rfloor, \qquad m_n^+ := \left\lceil \frac{n}{2} + (c_\rho + \varepsilon)\sqrt{n}\right\rceil.
    \end{equation*}
    These integers lie in $\{1, \ldots, n\}$ for all sufficiently large $n$. Since rounding changes each value by at most $1$, we have
    \begin{equation*}
        \frac{m_n^- - n/2}{\sqrt{n}} \longrightarrow c_\rho - \varepsilon,
        \qquad
        \frac{m_n^+ - n/2}{\sqrt{n}} \longrightarrow c_\rho + \varepsilon.
    \end{equation*}
    Lemma~\ref{lem:critical-window-rank} therefore gives
    \begin{equation*}
        K_{m_n^-}^{n}(\rho) > 0 \qquad \text{and} \qquad K_{m_n^+}^{n}(\rho) < 0
    \end{equation*}
    for all sufficiently large $n$. By Lemmas~\ref{lem:reduce-to-det-sign} and \ref{lem:determinant-recurrence}, these inequalities imply $f_{n,m_n^- - 1}^* > 0$ and $f_{n,m_n^+ - 1}^* < 0$. Since $f_{n,t}^*$ is nonincreasing in $t$ and $r_n$ is the first level at which this minimum is negative, we obtain
    \begin{equation*}
        m_n^- \leq r_n \leq m_n^+ - 1.
    \end{equation*}
    Consequently,
    \begin{equation*}
        c_\rho - \varepsilon \leq \liminf_{n \to \infty} \frac{r_n - n/2}{\sqrt{n}} \leq \limsup_{n \to \infty} \frac{r_n - n/2}{\sqrt{n}} \leq c_\rho + \varepsilon.
    \end{equation*}
    Letting $\varepsilon \downarrow 0$ proves \eqref{eq:rank-asymptotic}.
\end{proof}

Some numerical values are
\begin{center}
    \setlength{\tabcolsep}{5pt}
    \begin{tabular}{c|rrrrrrrr}
        $\rho$
        & $99/100$ & $9/10$ & $3/4$ & $3/5$
        & $1/2$ & $1/4$ & $1/10$ & $1/100$\\
        \hline
        $c_\rho$
        & $0.006286$ & $0.064705$ & $0.170732$ & $0.291092$
        & $0.382475$ & $0.682246$ & $0.991827$ & $1.528217$
    \end{tabular}
\end{center}

The dependence of $c_\rho$ on $\rho$ is illustrated in
Figure~\ref{fig:c-rho}. The curve is nonlinear over $(0,1)$ and exhibits different behavior at its two endpoints.
\begin{figure}[t]
    \centering
    \begin{tikzpicture}
        \begin{axis}[
            width=0.72\linewidth,
            height=0.46\linewidth,
            xmin=0,
            xmax=1.05,
            ymin=0,
            ymax=1.9,
            axis lines=left,
            axis line style={->},
            xlabel={$\rho$},
            ylabel={$c_\rho$},
            xtick={0,0.25,0.5,0.75,1},
            xticklabels={$0$,$1/4$,$1/2$,$3/4$,$1$},
            ytick={0,0.4,0.8,1.2,1.6},
            tick align=outside,
            tick label style={font=\small},
            label style={font=\small},
            clip=true
        ]
            \addplot[
                black,
                very thick
            ] coordinates {
                (0.002,1.799776)
                (0.005,1.651921)
                (0.0075,1.581002)
                (0.010,1.528217)
                (0.020,1.390964)
                (0.025,1.343216)
                (0.050,1.180982)
                (0.075,1.074179)
                (0.100,0.991827)
                (0.125,0.923611)
                (0.150,0.864730)
                (0.175,0.812532)
                (0.200,0.765385)
                (0.250,0.682246)
                (0.300,0.609896)
                (0.350,0.545322)
                (0.400,0.486659)
                (0.450,0.432668)
                (0.500,0.382475)
                (0.550,0.335444)
                (0.600,0.291092)
                (0.650,0.249045)
                (0.700,0.209004)
                (0.750,0.170732)
                (0.800,0.134029)
                (0.850,0.098733)
                (0.900,0.064705)
                (0.925,0.048129)
                (0.950,0.031828)
                (0.975,0.015788)
                (0.990,0.006286)
                (0.995,0.003138)
                (1.000,0.000000)
            };

            \addplot[
                only marks,
                mark=*,
                mark size=1.25pt,
                black
            ] coordinates {
                (0.010,1.528217)
                (0.100,0.991827)
                (0.250,0.682246)
                (0.500,0.382475)
                (0.600,0.291092)
                (0.750,0.170732)
                (0.900,0.064705)
                (0.990,0.006286)
            };
        \end{axis}
    \end{tikzpicture}
    \caption{The dependence of $c_\rho$ on $\rho$. The dots indicate the values reported in the preceding table. The curve was obtained numerically by solving $D_\rho(-2c_\rho) = 0$. It is truncated near $\rho = 0$, where $c_\rho \to \infty$, and uses the continuous extension
    $c_1 = 0$.}
    \label{fig:c-rho}
\end{figure}

The assumption that $\rho \in (0,1)$ is fixed is essential to the stated asymptotic conclusion. At the lower endpoint, $c_\rho \to \infty$ as $\rho \downarrow 0$. Indeed, fix $C > 0$. Dominated convergence in~\eqref{eq:kappa-definition-rank} gives
\begin{equation*}
    \kappa_\rho(C) \longrightarrow \int_0^\infty e^{-s^2/8}\cos(Cs)\,ds = \sqrt{2\pi}e^{-2C^2}>0.
\end{equation*}
The sign relation in Lemma~\ref{lem:critical-window-rank} therefore implies $c_\rho > C$ for all sufficiently small $\rho>0$. Since $C>0$ is arbitrary, it follows that $c_\rho \to \infty$. At the other endpoint, $c_\rho\to0$ as $\rho\uparrow1$. To see this, again fix $C>0$. Dominated convergence in~\eqref{eq:kappa-definition-rank} gives
\begin{equation*}
    \kappa_\rho(C) \longrightarrow \kappa_1(C) = -\int_0^\infty s e^{-s^2/8}\sin(Cs)\,ds = -4C\sqrt{2\pi}e^{-2C^2} < 0.
\end{equation*}
The same sign relation implies $0 < c_\rho < C$ whenever $\rho < 1$ is sufficiently close to one. Since $C > 0$ is arbitrary, the claim follows.

\medskip
{\bf Acknowledgments:} The authors used OpenAI’s GPT-5.6 Sol to assist in the derivations in Sections \ref{sec:rank-computation} and \ref{sec:asymptotic-analysis}. The authors independently verified all arguments and take full responsibility for the manuscript.

\bibliographystyle{plainnat}
\bibliography{references}

\end{document}